\newtheorem{theorem}{Theorem}[section]
\newtheorem{lemma}[theorem]{Lemma}
\newtheorem{corollary}[theorem]{Corollary}
\theoremstyle{definition}
\newtheorem{definition}[theorem]{Definition}
\newtheorem{example}[theorem]{Example}
\theoremstyle{remark}
\numberwithin{equation}{section}
\begin{document}

\title [{{On Graded classical 2-absorbing second submodules }}]{On Graded classical 2-absorbing second submodules of graded modules over graded commutative rings}

 \author[{{K. Al-Zoubi and Mariam  Al-Azaizeh }}]{\textit{Khaldoun Al-Zoubi* and  Mariam  Al-Azaizeh}}

\address
{\textit{Khaldoun Al-Zoubi, Department of Mathematics and
Statistics, Jordan University of Science and Technology, P.O.Box
3030, Irbid 22110, Jordan.}}
\bigskip
{\email{\textit{kfzoubi@just.edu.jo}}}
\address
{\textit{ Mariam  Al-Azaizeh, Department of Mathematics, University of Jordan, Amman, Jordan..}}
\bigskip
{\email{\textit{ maalazaizeh15@sci.just.edu.jo}}}

 \subjclass[2010]{13A02, 16W50.}

\date{}
\begin{abstract}
 Let $G$ be a group with identity $e$. Let $R$ be a $G$-graded commutative
ring and $M$ a graded $R$-module. In this paper, we introduce the concept of graded classical and graded
strongly classical 2-absorbing second submodules of graded modules over a
graded commutative rings. A number of results concerning of these classes
of graded submodules and their homogeneous components are given.
\end{abstract}

\keywords{graded classical 2-absorbing second submodules, graded strongly classical 2-absorbing second submodules, graded 2-absorbing second submodules. \\
$*$ Corresponding author}
 \maketitle


 \section{Introduction}
  Badawi in \cite{19} introduced the concept of 2-absorbing ideals of
commutative rings.
Later on, Anderson and Badawi in \cite{13} generalized the concept of
2-absorbing ideals of commutative rings to the concept of $n$-absorbing
ideals of commutative rings for every positive integer $n\geq 2.$
In light of \cite{19} and \cite{13}, many authors studied the
concept of 2-absorbing submodules and $n$-absorbing submodules.
In \cite{17}, the authors introduced and studied the
concepts of 2-absorbing and strongly 2-absorbing second submodules.
In \cite{23}, the authors introduced and studied the concept of
classical 2-absorbing submodules as a generalization of
2-absorbing submodules. Recently, H. Ansari-Toroghy and F. Farshadifar in \cite{14} introduced
and studied the concepts of classical and strongly classical 2-absorbing
second submodules of modules over commutative rings.

The scope of this paper is devoted to the theory of graded modules over
graded commutative rings. One use of rings and modules with gradings is in
describing certain topics in algebraic geometry. Here, in particular, we are
dealing with graded classical and graded strongly classical 2-absorbing
second submodules of graded modules over graded commutative rings.

Graded prime ideals of graded commutative rings have been introduced and
studied in [10, 30-32].
S.E. Atani in \cite{18} extended graded prime ideals to graded prime
submodules. Several authors investigated properties of graded prime
submodules, for examples see \cite{ 1, 9, 11, 29}.
The concept of graded 2-absorbing ideals, generalizations of graded
prime ideals, were studied by K. Al-Zoubi and R. Abu-Dawwas, and other
authors, (see \cite{3, 25}.)
K. Al-Zoubi and R. Abu-Dawwas in \cite{2} extended graded
2-absorbing ideals to graded 2-absorbing submodules.
Later on, M. Hamoda and A. E. Ashour in \cite{24} introduced the
concept of graded n-absorbing submodules that is a generalization of the
concept of graded prime ideals.
The notion of graded classical prime submodules as a generalization
of graded prime submodules was introduced in \cite{21} and studied in%
\cite{ 9, 6, 8, 12}.
The notion of graded second submodules was introduced in \cite{15} and
studied in \cite{7, 16, 20}.
In\cite{4}, the authors introduced and studied the concept of
graded classical 2-absorbing submodules as a generalization of
graded 2-absorbing submodules.

Recently, K. Al-Zoubi and M. Al-Azaizeh in \cite{5} introduced and
studied the concepts of graded 2-absorbing and graded strongly 2-absorbing
second submodules.

Here, we introduce the concept of graded classical (resp. graded strongly
classical) 2-absorbing second submodules of graded modules over a
commutative graded rings as a generalization of graded 2-absorbing (resp.
graded strongly 2-absorbing) second submodules and investigate some
properties of these classes of graded submodules.

 \section{Preliminaries}

\textbf{Convention}. Throughout this paper all rings are commutative with identity and all modules are unitary.
First, we recall some basic properties of graded rings and modules which
will be used in the sequel. We refer to \cite{22}, \cite{26}, \cite{27}
and \cite{28} for these basic properties and more information on graded
rings and modules. Let $G$ be a multiplicative group and $e$ denote the identity element of $G$. A ring $R$ is called a graded ring (or $G$-graded ring) if there exist additive subgroups $R_{\alpha }$ of $R$ indexed by the elements $\alpha \in G
$ such that $R=\bigoplus_{\alpha \in G}R_{\alpha }$ and $R_{\alpha }R_{\beta
}\subseteq R_{\alpha \beta }$ for all $\alpha $, $\beta \in G$.
The elements of $R_{\alpha }$ are called homogeneous of degree $\alpha $ and
all the homogeneous elements are denoted by $h(R)$, i.e. $h(R)=\cup _{\alpha
\in G}R_{\alpha }$. If $r\in R$, then $r$ can be written uniquely as $%
\sum_{\alpha \in G}r_{\alpha }$, where $r_{\alpha }$ is called a homogeneous
component of $r$ in $R_{\alpha }$. Moreover, $R_{e}$ is a subring of $R$ and
$1\in R_{e}$. Let $R=\bigoplus_{\alpha \in G}R_{\alpha }$ be a $G$-graded
ring. An ideal $I$ of $R$ is said to be a graded ideal if $%
I=\bigoplus_{\alpha \in G}(I\cap R_{\alpha }):=\bigoplus_{\alpha \in
G}I_{\alpha }$.
Let $R=\bigoplus_{\alpha \in G}R_{\alpha }$ be a $G$-graded ring. A Left $R$%
-module $M$ is said to be \textit{a graded }$R$\textit{-module} (or $G$%
\textit{-graded }$R$\textit{-module}) if there exists a family of additive
subgroups $\{M_{\alpha }\}_{\alpha \in G}$ of $M$ such that $%
M=\bigoplus_{\alpha \in G}M_{\alpha }$ and $R_{\alpha }M_{\beta }\subseteq
M_{\alpha \beta }$ for all $\alpha ,\beta \in G$. Here, $R_{\alpha }M_{\beta
}$ denotes the additive subgroup of $M$ consisting of all finite sums of
elements $r_{\alpha }m_{\beta }$ with $r_{\alpha }\in R_{\alpha }$ and $%
m_{\beta }\in M_{\beta }.$ Also if an element of $M$ belongs to $\cup
_{\alpha \in G}M_{\alpha }=h(M)$, then it is called a homogeneous. Note
that $M_{\alpha }$ is an $R_{e}$-module for every $\alpha \in G$. So, if $%
I=\bigoplus_{\alpha \in G}I_{\alpha }$ is a graded ideal of $R$, then $%
I_{\alpha }$ is an $R_{e}$-module for every $\alpha \in G$. Let $%
R=\bigoplus_{\alpha \in G}R_{\alpha }$ be a $G$-graded ring. A submodule $N
$ of $M$ is said to be \textit{a graded submodule of }$M$ if $%
N=\bigoplus_{\alpha \in G}(N\cap M_{\alpha }):=\bigoplus_{\alpha \in
G}N_{\alpha }.$ In this case, $N_{\alpha }$ is called the $\alpha $%
-component of $N$.

Let $R$ be a $G$-graded ring and $M$ a graded $R$-module.
A proper graded submodule $C$ of $M$ is said to be \textit{a completely
graded irreducible} if $C=\cap _{\alpha \in \Delta }C_{\alpha },$ where $%
\{C_{\alpha }\}_{\alpha \in \Delta }$ is a family of graded submodules of $M$%
, implies that $C=C_{\beta }$ for some $\beta \in \Delta $ (see \cite{5}.) A non-zero graded submodule $S$ of $M$ is said to be \textit{a graded
2-absorbing second submodule of }$M$ if whenever $r,$ $t\in h(R),$
$C$ is a completely graded irreducible submodule of $M$, and $rtS\subseteq C,
$ then $rS\subseteq C$ or $tS\subseteq C$ or $rt\in Ann_{R}(S)$ (see \cite{5}.) A non-zero graded submodule $S$ of $M$ is said to be \textit{a graded
strongly 2-absorbing second submodule of} $M$ if whenever $r,$ $t\in h(R),$ $%
C_{1},C_{2}$ are completely graded irreducible submodules of $M$, and $%
rtS\subseteq C_{1}\cap C_{2},$ then $rS\subseteq C_{1}\cap C_{2}$ or $%
tS\subseteq C_{1}\cap C_{2}$ or $rt\in Ann_{R}(S)$ (see \cite{5}.) A
proper graded submodule $C$ of $M$ is said to be a graded classical
2-absorbing submodule of $M$ if $C\neq M$; and whenever $r,s,t\in h(R)$ and $%
m\in h(M)$ with $rstm\in C$, then either $rsm\in C$ or $rtm\in C$ or $stm\in C
$ (see \cite{4}.)



 \section{Graded classical 2-absorbing second submodules}
\begin{definition}
Let $R$ be a $G$-graded ring and $M$ a graded $R$-module. A non-zero graded submodule $C$ of $M$ is said to be \textit{a graded
classical 2-absorbing second submodule of $M,$} if whenever $%
r_{\alpha },$ $s_{\beta }$, $t_{\gamma }\in h(R)$, $U$  is a completely
graded irreducible submodule of $M$ and $r_{\alpha }s_{\beta }t_{\gamma
}C\subseteq U$, then either $r_{\alpha }s_{\beta }C\subseteq U$ or $s_{\beta
}t_{\gamma }C\subseteq U$ or $r_{\alpha }t_{\gamma }C\subseteq U$. We say
that $M$ is \textit{ a graded classical 2-absorbing second module }if $M$ is a graded classical 2-absorbing second submodule of itself.
\end{definition}

\begin{lemma}
Let $R$ be a $G$-graded ring, $M$ a graded $R$-module and $C$ a graded classical 2-absorbing second submodule of $M$.
Let $I=\bigoplus_{\gamma \in G}I_{\gamma }$ be a graded ideal of $R$. Then
for every $r_{\alpha },s_{\beta }$ $\in h(R)$, $\gamma \in G$ and completely
graded irreducible submodule $U$ of $M$ with $r_{\alpha }s_{\beta
}I_{\gamma }C\subseteq U$, either $r_{\alpha }I_{\gamma }C\subseteq U$ or $\
s_{\beta }I_{\gamma }C\subseteq U$ or $r_{\alpha }s_{\beta }C\subseteq U$.
\end{lemma}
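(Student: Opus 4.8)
The plan is to reduce the statement about the whole component $I_\gamma$ to the defining property of $C$, applied one homogeneous element at a time, and then to glue the resulting case distinctions together with an additivity argument of prime-avoidance type.

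First I would observe that every element of $I_\gamma = I \cap R_\gamma$ is homogeneous of degree $\gamma$, hence lies in $h(R)$. Thus for each $t_\gamma \in I_\gamma$ the triple $r_\alpha, s_\beta, t_\gamma$ is a triple of homogeneous elements, and the hypothesis $r_\alpha s_\beta I_\gamma C \subseteq U$ gives in particular $r_\alpha s_\beta t_\gamma C \subseteq U$. Applying the definition of a graded classical 2-absorbing second submodule to this inclusion, for each such $t_\gamma$ we obtain $r_\alpha s_\beta C \subseteq U$ or $s_\beta t_\gamma C \subseteq U$ or $r_\alpha t_\gamma C \subseteq U$.

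If $r_\alpha s_\beta C \subseteq U$ there is nothing to prove, so I would assume $r_\alpha s_\beta C \not\subseteq U$. Then for every $t_\gamma \in I_\gamma$ we have $s_\beta t_\gamma C \subseteq U$ or $r_\alpha t_\gamma C \subseteq U$. Set $A = \{t_\gamma \in I_\gamma : s_\beta t_\gamma C \subseteq U\}$ and $B = \{t_\gamma \in I_\gamma : r_\alpha t_\gamma C \subseteq U\}$, so that $I_\gamma = A \cup B$. The goal becomes to show $I_\gamma = A$ or $I_\gamma = B$: indeed, since $U$ is a submodule, $I_\gamma = A$ forces every generator $s_\beta t_\gamma c$ to lie in $U$ and hence $s_\beta I_\gamma C \subseteq U$, and likewise $I_\gamma = B$ yields $r_\alpha I_\gamma C \subseteq U$.

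The main obstacle, and really the only nontrivial point, is ruling out a mixed situation where $A$ and $B$ properly overlap $I_\gamma$. Suppose toward a contradiction that neither equality holds, so there exist $a \in A \setminus B$ and $b \in B \setminus A$. Because $I_\gamma$ is an $R_e$-module it is closed under addition, so $a+b \in I_\gamma$ is again homogeneous of degree $\gamma$ and $r_\alpha s_\beta (a+b) C \subseteq U$. Applying the defining property to $a+b$, and using $r_\alpha s_\beta C \not\subseteq U$, leaves two cases. If $s_\beta (a+b) C \subseteq U$, then since $a \in A$ gives $s_\beta a C \subseteq U$ and $U$ is closed under subtraction, we deduce $s_\beta b C \subseteq U$, i.e. $b \in A$, a contradiction. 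Symmetrically, if $r_\alpha (a+b) C \subseteq U$, then subtracting $r_\alpha b C \subseteq U$ (from $b \in B$) gives $r_\alpha a C \subseteq U$, i.e. $a \in B$, again a contradiction. Hence $I_\gamma = A$ or $I_\gamma = B$, which is precisely the asserted trichotomy.
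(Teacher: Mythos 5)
Your proof is correct and is essentially the paper's own argument in different clothing: your elements $a \in A \setminus B$ and $b \in B \setminus A$ are exactly the paper's witnesses $i_{\gamma}, i_{\gamma}'$ (chosen there by contraposition from $r_{\alpha}I_{\gamma}C \nsubseteq U$ and $s_{\beta}I_{\gamma}C \nsubseteq U$), and the decisive step --- applying the defining property to the homogeneous sum $a+b$ and subtracting inside $U$ to force a contradiction --- is identical. The union framing $I_{\gamma} = A \cup B$ is a tidy repackaging, but not a different route.
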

\begin{proof}
Let $r_{\alpha },s_{\beta }$ $\in h(R)$, $\gamma \in G$
and $U$ be a completely graded irreducible submodule of $M$ such that $%
r_{\alpha }s_{\beta }I_{\gamma }C\subseteq U$, $r_{\alpha }I_{\gamma
}C\nsubseteq U$ and $s_{\beta }I_{\gamma }C\nsubseteq U.$ We have to show
that $r_{\alpha }s_{\beta }C\subseteq U.$ By our assumption there exist $%
i_{\gamma },$ $i_{\gamma }^{\prime }\in I_{\gamma }$ such that $r_{\alpha }\
i_{\gamma }C\nsubseteq U$ and $s_{\beta }\ i_{\gamma }^{\prime }C\nsubseteq
U.$ As $r_{\alpha }s_{\beta }i_{\gamma }C\subseteq U$ and $C$ is a graded
classical 2-absorbing second submodule, we have either $r_{\alpha }s_{\beta
}C\subseteq U$ or $\ s_{\beta }i_{\gamma }C\subseteq U.$
Similarly, by $r_{\alpha }s_{\beta }i_{\gamma }^{\prime }C\subseteq U,$ we
get either $r_{\alpha }s_{\beta }C\subseteq U$ or $r_{\alpha }i_{\gamma
}^{\prime }C\subseteq U.$ If $r_{\alpha }s_{\beta }C\subseteq U,$ we are
done. Suppose that $r_{\alpha }s_{\beta }C\nsubseteq U,$ which implies that $%
s_{\beta }i_{\gamma }C\subseteq U$ and $\ r_{\alpha }i_{\gamma }^{\prime
}C\subseteq U.\ $By $(i_{\gamma }+i_{\gamma }^{\prime })\in I_{\gamma },$ it
follows that $\ r_{\alpha }s_{\beta }(i_{\gamma }+i_{\gamma }^{\prime
})C\subseteq U.$ Then either $\ r_{\alpha }(i_{\gamma }+i_{\gamma }^{\prime
})C\subseteq U$ or $s_{\beta }(i_{\gamma }+i_{\gamma }^{\prime })C\subseteq U
$ as $C$ is a graded classical 2-absorbing second submodule of $M.$ If $%
r_{\alpha }(i_{\gamma }+i_{\gamma }^{\prime })C\subseteq U,$ then $r_{\alpha
}i_{\gamma }C\subseteq U$ since $r_{\alpha }i_{\gamma }^{\prime
}C\subseteq U,$ which is a contradiction. Similarly, if $s_{\beta
}(i_{\gamma }+i_{\gamma }^{\prime })C\subseteq U,$ we get a contradiction.
Thus $r_{\alpha }s_{\beta }C\subseteq U.$
\end{proof}

\begin{lemma}
Let $R$ be a $G$-graded ring, $M$ a graded $R$-module
and $C$ a graded classical 2-absorbing second submodule of $M$. Let $%
I=\bigoplus_{\beta \in G}I_{\beta }$ and $J=\bigoplus_{\gamma \in
G}J_{\gamma }$\ be a graded ideals of $R.$ Then for every $r_{\alpha }$ $\in
h(R)$, $\beta ,\gamma \in G$ and completely graded irreducible submodule $U$
of $M$ \ with $r_{\alpha }I_{\beta }J_{\gamma }C\subseteq U$, either $\
r_{\alpha }I_{\beta }C\subseteq U$ or $r_{\alpha }J_{\gamma }C\subseteq U$
or $\ I_{\beta }J_{\gamma }C\subseteq U.$
\end{lemma}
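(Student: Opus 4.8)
The plan is to mimic the structure of the preceding Lemma, but now with two graded ideals in place of one, reducing the problem to homogeneous generators and then applying the previous Lemma (Lemma with a single ideal $I_\gamma$) as a black box rather than the raw definition. Fix $r_\alpha \in h(R)$, indices $\beta,\gamma\in G$, and a completely graded irreducible submodule $U$ with $r_\alpha I_\beta J_\gamma C\subseteq U$. Assume toward the trichotomy that $r_\alpha I_\beta C\nsubseteq U$ and $r_\alpha J_\gamma C\nsubseteq U$; the goal is to deduce $I_\beta J_\gamma C\subseteq U$.

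First I would extract witnesses: by the two negative assumptions there exist homogeneous $i_\beta\in I_\beta$ and $j_\gamma\in J_\gamma$ with $r_\alpha i_\beta C\nsubseteq U$ and $r_\alpha j_\gamma C\nsubseteq U$. The aim is to show $i_\beta j_\gamma C\subseteq U$ for \emph{every} such pair, which then yields $I_\beta J_\gamma C\subseteq U$ since $I_\beta J_\gamma$ is generated over $R_e$ by such products and $U$ is a graded submodule. For a fixed pair, note $r_\alpha i_\beta j_\gamma C\subseteq U$, so applying the definition of graded classical $2$-absorbing second submodule to the homogeneous triple $(r_\alpha, i_\beta, j_\gamma)$ gives $r_\alpha i_\beta C\subseteq U$ or $i_\beta j_\gamma C\subseteq U$ or $r_\alpha j_\gamma C\subseteq U$. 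Since the first and third options are excluded by the choice of witnesses, we obtain $i_\beta j_\gamma C\subseteq U$ directly.

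\textbf{The main obstacle} is the summing argument needed to pass from the chosen witnesses to \emph{arbitrary} elements of $I_\beta$ and $J_\gamma$: the conclusion $i_\beta j_\gamma C\subseteq U$ was derived only for the specific witnesses, so I must run a perturbation argument analogous to the one in the previous Lemma. Concretely, take an arbitrary $a_\beta\in I_\beta$. If $r_\alpha a_\beta C\nsubseteq U$, then $a_\beta$ is itself a valid witness and the argument above gives $a_\beta j_\gamma C\subseteq U$ for every witness $j_\gamma$; if instead $r_\alpha a_\beta C\subseteq U$, I replace $a_\beta$ by $a_\beta+i_\beta\in I_\beta$, which satisfies $r_\alpha(a_\beta+i_\beta)C\nsubseteq U$ (otherwise $r_\alpha i_\beta C\subseteq U$, a contradiction), conclude $(a_\beta+i_\beta)j_\gamma C\subseteq U$, and subtract to recover $a_\beta j_\gamma C\subseteq U$. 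The symmetric perturbation handles arbitrary $b_\gamma\in J_\gamma$ against arbitrary $a_\beta$, so one performs the shift in both coordinates to reach $a_\beta b_\gamma C\subseteq U$ for all $a_\beta\in I_\beta$, $b_\gamma\in J_\gamma$.

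Finally I would assemble the pieces: having shown $a_\beta b_\gamma C\subseteq U$ for all homogeneous generators $a_\beta\in I_\beta$ and $b_\gamma\in J_\gamma$, and since every element of $I_\beta J_\gamma$ is a finite $R_e$-linear combination of such products while $U$ is closed under these operations, it follows that $I_\beta J_\gamma C\subseteq U$, completing the trichotomy. I expect the delicate points to be bookkeeping the degree conditions so that every intermediate product stays homogeneous of the correct degree (so the definition genuinely applies), and ensuring the additive shifts $a_\beta+i_\beta$ and $b_\gamma+j_\gamma$ remain inside the respective homogeneous components $I_\beta$ and $J_\gamma$ — both of which hold because $I$ and $J$ are graded ideals, so their $\beta$- and $\gamma$-components are additive subgroups.
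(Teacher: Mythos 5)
Your argument is correct, but it takes a genuinely different route from the paper's. The paper bootstraps from the preceding one-ideal lemma (Lemma 3.2): for arbitrary $i_{\beta}\in I_{\beta}$, $j_{\gamma}\in J_{\gamma}$ and witnesses $i_{\beta}'$, $j_{\gamma}'$, it first invokes Lemma 3.2 to obtain $i_{\beta}'J_{\gamma}C\subseteq U$ and $j_{\gamma}'I_{\beta}C\subseteq U$, then applies the definition once to the \emph{simultaneously} shifted element, $r_{\alpha}(i_{\beta}+i_{\beta}')(j_{\gamma}+j_{\gamma}')C\subseteq U$, and resolves the resulting three-way case split, calling Lemma 3.2 again in two of the cases and expanding the four-term product $(i_{\beta}+i_{\beta}')(j_{\gamma}+j_{\gamma}')C$ in the third. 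You never use Lemma 3.2 at all --- note that your opening sentence announces you will apply ``the previous Lemma as a black box,'' but your executed argument works purely from the raw definition, so that sentence should be corrected --- and you decouple the perturbations, shifting one coordinate at a time: a base case for a witness pair $(i_{\beta},j_{\gamma})$; then arbitrary $a_{\beta}$ against a witness $b_{\gamma}$, via the shift $a_{\beta}\mapsto a_{\beta}+i_{\beta}$ and subtraction inside the submodule $U$; then arbitrary $b_{\gamma}$ via $b_{\gamma}\mapsto b_{\gamma}+j_{\gamma}$, where the subtraction of $a_{\beta}j_{\gamma}C\subseteq U$ is licensed by the previous stage (your sketch has the stages in the right order, so there is no circularity). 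What your route buys: in every application of the definition both ring elements besides $r_{\alpha}$ are witnesses, so two of the three alternatives are excluded outright and no case analysis ever arises; the proof is self-contained at the level of the definition. What the paper's route buys: by reusing Lemma 3.2 it packages the shift bookkeeping already done there, at the price of a three-case analysis here. Your closing points --- that the shifts stay in $I_{\beta}$ and $J_{\gamma}$ because these are additive subgroups of $R_{\beta}$ and $R_{\gamma}$, and that $I_{\beta}J_{\gamma}C\subseteq U$ follows from $a_{\beta}b_{\gamma}C\subseteq U$ for all pairs since $U$ is closed under finite sums --- are both sound.
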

\begin{proof}
Let $r_{\alpha }$ $\in h(R)$, $\beta ,\gamma \in G$ and $U$
be a completely graded irreducible submodule of $M$ such that $r_{\alpha
}I_{\beta }J_{\gamma }C\subseteq U,$ $r_{\alpha }I_{\beta }C\nsubseteq U$
and $r_{\alpha }J_{\gamma }C\nsubseteq U.$ We have to show that $I_{\beta
}J_{\gamma }C\subseteq U.$ Let $i_{\beta }\in I_{\beta }$ and $j_{\gamma
}\in J_{\gamma }.$ By our\ assumption, there exist $i_{\beta }^{\prime }\in
I_{\beta }$ and $j_{\gamma }^{\prime }\in J_{\gamma }$ such that $r_{\alpha
}i_{\beta }^{\prime }C\nsubseteq U$ and $r_{\alpha }j_{\gamma }^{\prime
}C\nsubseteq U.$ Since $r_{\alpha }i_{\beta }^{\prime }J_{\gamma }C\subseteq
U,r_{\alpha }i_{\beta }^{\prime }C\nsubseteq U$ and $r_{\alpha }J_{\gamma
}C\nsubseteq U,$ by Lemma 3.2, we get $i_{\beta }^{\prime
}J_{\gamma }C\subseteq U.$ Similarly, by $\ r_{\alpha }j_{\gamma }^{\prime
}I_{\beta }C\subseteq U,$ $r_{\alpha }j_{\gamma }^{\prime }C\nsubseteq U$
and $r_{\alpha }I_{\beta }C\nsubseteq U,$ we get $_{\ }j_{\gamma }^{\prime
}I_{\beta }C\subseteq U.$ By $(i_{\beta }+i_{\beta }^{\prime })\in I_{\beta
}\ $and $(j_{\gamma }+j_{\gamma }^{\prime })\in J_{\gamma },$ it follow that
$r_{\alpha }(i_{\beta }+i_{\beta }^{\prime })(j_{\gamma }+j_{\gamma
}^{\prime })C\subseteq U.$ Since $C$ is a graded classical 2-absorbing
second submodule, we have either \ $r_{\alpha }(i_{\beta }+i_{\beta
}^{\prime })C\subseteq U$ or $r_{\alpha }\ (j_{\gamma }+j_{\gamma }^{\prime
})C\subseteq U$ or $\ (i_{\beta }+i_{\beta }^{\prime })(j_{\gamma
}+j_{\gamma }^{\prime })C\subseteq U.$ If $r_{\alpha }(i_{\beta }+i_{\beta
}^{\prime })C\subseteq U,$ then $r_{\alpha }i_{\beta }C\nsubseteq U$ since $%
r_{\alpha }i_{\beta }^{\prime }C\nsubseteq U.$ Since $r_{\alpha }i_{\beta
}J_{\gamma }C\subseteq U,$ $r_{\alpha }J_{\gamma }C\nsubseteq U$ and $%
r_{\alpha }i_{\beta }C\nsubseteq U,$ by Lemma 3.2, we get$\
i_{\beta }J_{\gamma }C\subseteq U$ and hence $i_{\beta }j_{\gamma
}C\subseteq U.$ Similarly, if $r_{\alpha }\ (j_{\gamma }+j_{\gamma }^{\prime
})C\subseteq U,$ we conclude that $i_{\beta }j_{\gamma }C\subseteq U.$ If $%
(i_{\beta }+i_{\beta }^{\prime })(j_{\gamma }+j_{\gamma }^{\prime
})C\subseteq U,$ then $i_{\beta }j_{\gamma }C+i_{\beta }j_{\gamma }^{\prime
}C+i_{\beta}^{\prime }j_{\gamma }C+i_{\beta }^{\prime }j_{\gamma }^{\prime
}C\subseteq U$, this implies that \textbf{\ }$i_{\beta }j_{\gamma
}C\subseteq U.$ Thus $I_{\beta }J_{\gamma }C\subseteq U.$
\end{proof}

The following theorem give us a characterization of graded classical 2-absorbing second submodule of a graded module.

\begin{theorem}
Let $R$ be a $G$-graded ring, $M$ a graded $R$-module
and $C$ a non-zero graded submodule of $M$. Let $I=\bigoplus_{\alpha \in G}I_{\alpha },$ $J=\bigoplus_{\beta \in
G}J_{\beta }$ and $K=\bigoplus_{\gamma \in G}K_{\gamma }$ be a graded ideals
of $R$. Then the following statements are equivalent:
\begin{enumerate}[\upshape (i)]

\item $C$ is a graded classical 2-absorbing second submodule of $M$;

\item For every $\alpha ,$ $\beta ,$ $\gamma \in G$ and completely graded
irreducible submodule $U$ of $M\ $ with $I_{\alpha }J_{\beta }K_{\gamma
}C\subseteq U$, either $J_{\beta }K_{\gamma }C\subseteq U$ or $I_{\alpha
}J_{\beta }C\subseteq U$ or $I_{\alpha }K_{\gamma }C\subseteq U.$
\end{enumerate}
\end{theorem}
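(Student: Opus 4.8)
The plan is to establish the two implications (i) $\Rightarrow$ (ii) and (ii) $\Rightarrow$ (i) separately. The implication (ii) $\Rightarrow$ (i) is the routine specialization: I would recover the element-wise definition from the ideal-component statement by passing to principal graded ideals. The implication (i) $\Rightarrow$ (ii) is the substantive one; here I would upgrade from the ``one element plus two ideals'' situation of Lemma 3.3 to the ``three ideals'' situation of (ii), using a witness-plus-addition argument of exactly the type already used to pass from Lemma 3.2 to Lemma 3.3.

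For (i) $\Rightarrow$ (ii), I would fix $\alpha,\beta,\gamma\in G$ and a completely graded irreducible submodule $U$ with $I_{\alpha}J_{\beta}K_{\gamma}C\subseteq U$, and suppose $I_{\alpha}J_{\beta}C\nsubseteq U$ and $I_{\alpha}K_{\gamma}C\nsubseteq U$; the goal is $J_{\beta}K_{\gamma}C\subseteq U$. Since $I_{\alpha}J_{\beta}C=\sum_{a\in I_{\alpha}}aJ_{\beta}C$ is a sum of submodules, the failure $I_{\alpha}J_{\beta}C\nsubseteq U$ forces some $a_{\alpha}\in I_{\alpha}$ with $a_{\alpha}J_{\beta}C\nsubseteq U$; similarly I choose $b_{\alpha}\in I_{\alpha}$ with $b_{\alpha}K_{\gamma}C\nsubseteq U$. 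Assume for contradiction that $J_{\beta}K_{\gamma}C\nsubseteq U$. Applying Lemma 3.3 (with the single element $a_{\alpha}$ and the two ideals $J,K$) to $a_{\alpha}J_{\beta}K_{\gamma}C\subseteq U$ rules out $a_{\alpha}J_{\beta}C\subseteq U$ and $J_{\beta}K_{\gamma}C\subseteq U$, leaving $a_{\alpha}K_{\gamma}C\subseteq U$; symmetrically, applying it to $b_{\alpha}$ yields $b_{\alpha}J_{\beta}C\subseteq U$. Feeding the sum $a_{\alpha}+b_{\alpha}\in I_{\alpha}$ into Lemma 3.3, one of $(a_{\alpha}+b_{\alpha})J_{\beta}C\subseteq U$ or $(a_{\alpha}+b_{\alpha})K_{\gamma}C\subseteq U$ must hold, the third option being excluded. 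In the first case, subtracting $b_{\alpha}J_{\beta}C\subseteq U$ gives $a_{\alpha}J_{\beta}C\subseteq U$, a contradiction; in the second, subtracting $a_{\alpha}K_{\gamma}C\subseteq U$ gives $b_{\alpha}K_{\gamma}C\subseteq U$, again a contradiction. Hence $J_{\beta}K_{\gamma}C\subseteq U$.

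For (ii) $\Rightarrow$ (i), I would take $r_{\alpha},s_{\beta},t_{\gamma}\in h(R)$ and a completely graded irreducible $U$ with $r_{\alpha}s_{\beta}t_{\gamma}C\subseteq U$, and apply (ii) to the principal graded ideals $I=Rr_{\alpha}$, $J=Rs_{\beta}$, $K=Rt_{\gamma}$, whose relevant homogeneous components are $I_{\alpha}=R_{e}r_{\alpha}$, $J_{\beta}=R_{e}s_{\beta}$, $K_{\gamma}=R_{e}t_{\gamma}$ (each containing the respective element since $1\in R_{e}$). Because $R$ is commutative and $R_{e}R_{e}=R_{e}$, one has $I_{\alpha}J_{\beta}K_{\gamma}C=R_{e}r_{\alpha}s_{\beta}t_{\gamma}C\subseteq U$, so (ii) yields one of $R_{e}s_{\beta}t_{\gamma}C\subseteq U$, $R_{e}r_{\alpha}s_{\beta}C\subseteq U$, $R_{e}r_{\alpha}t_{\gamma}C\subseteq U$; since $1\in R_{e}$ each contains the corresponding product $s_{\beta}t_{\gamma}C$, $r_{\alpha}s_{\beta}C$, or $r_{\alpha}t_{\gamma}C$, giving exactly the trichotomy in the definition.

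The main obstacle is the implication (i) $\Rightarrow$ (ii), and within it the bookkeeping of the addition step: one must choose the two witnesses $a_{\alpha},b_{\alpha}$ for the two distinct failed inclusions, track which alternative of Lemma 3.3 survives for each, and verify that the sum $a_{\alpha}+b_{\alpha}$ forces a contradiction in every remaining case. The other direction and all the ``contains the product'' reductions are immediate once one notes $1\in R_{e}$.
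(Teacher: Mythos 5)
Your proposal is correct and is essentially the paper's own proof: both directions use the same ingredients, namely Lemma 3.3 applied to the elements of $I_{\alpha}$ together with the witness-plus-sum trick for (i) $\Rightarrow$ (ii), and specialization to the principal graded ideals $Rr_{\alpha}$, $Rs_{\beta}$, $Rt_{\gamma}$ for (ii) $\Rightarrow$ (i), where you are in fact slightly more careful than the paper in identifying the components as $R_{e}r_{\alpha}$, etc. The only difference is cosmetic: you arrange the trichotomy contrapositively (assuming both $I_{\alpha}J_{\beta}C\nsubseteq U$ and $I_{\alpha}K_{\gamma}C\nsubseteq U$ and deriving $J_{\beta}K_{\gamma}C\subseteq U$ by contradiction), whereas the paper assumes $J_{\beta}K_{\gamma}C\nsubseteq U$ and derives the remaining disjunction.
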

\begin{proof}
$(i) \Rightarrow (ii)$Assume that $C$ is a graded classical 2-absorbing
second submodule of $M.$ Let $U$ be a completely graded irreducible
submodule of $M$ and $\alpha ,$ $\beta ,$ $\gamma \in G$ such that $%
I_{\alpha }J_{\beta }K_{\gamma }C\subseteq U$ and $J_{\beta }K_{\gamma
}C\nsubseteq U.$ Then by Lemma 3.3, for all $i_{\alpha }\in
I_{\alpha },$ we have either $i_{\alpha }J_{\beta }C\subseteq U$ or $%
i_{\alpha }K_{\gamma }C\subseteq U.$ If $i_{\alpha }J_{\beta }C\subseteq U$
\ for all $i_{\alpha }\in I_{\alpha },$ then $I_{\alpha }J_{\beta
}C\subseteq U,$ we are done. Similarly, if $_{\ }i_{\alpha }K_{\gamma
}C\subseteq U$ for all $i_{\alpha }\in I_{\alpha },$ then $I_{\alpha
}K_{\gamma }C\subseteq U,$ we are done. Suppose that there exist \textbf{\ }$%
i_{\alpha },$ $i_{\alpha }^{\prime }\in I_{\alpha }$ such that $i_{\alpha
}J_{\beta }C\nsubseteq U$ and $i_{\alpha }^{\prime }K_{\gamma }C\nsubseteq U.
$ Since $i_{\alpha }J_{\beta }K_{\gamma }C\subseteq U,$ $i_{\alpha }J_{\beta
}C\nsubseteq U$ and $J_{\beta }K_{\gamma }C\nsubseteq U,$ by Lemma
3.3, we get $i_{\alpha }\ K_{\gamma }C\subseteq U.$ Similarly, by $%
i_{\alpha }^{\prime }J_{\beta }K_{\gamma }C\subseteq U,$ $i_{\alpha
}^{\prime }K_{\gamma }C\nsubseteq U$ and $J_{\beta }K_{\gamma }C\nsubseteq U,
$ we get $i_{\alpha }^{\prime }J_{\beta }C\subseteq U.$ Since $(i_{\alpha
}+i_{\alpha }^{\prime })J_{\beta }K_{\gamma }C\subseteq U\ $and $J_{\beta
}K_{\gamma }C\nsubseteq U$, by Lemma 3.3, we get either $(i_{\alpha
}+i_{\alpha }^{\prime })J_{\beta }C\subseteq U$ or $(i_{\alpha }+i_{\alpha
}^{\prime })K_{\gamma }C\subseteq U.$ If $(i_{\alpha }+i_{\alpha }^{\prime
})J_{\beta }C\subseteq U,$ then $i_{\alpha }^{\prime }J_{\beta }C\nsubseteq U
$ since $i_{\alpha }J_{\beta }C\nsubseteq U,$ which is a contradiction.
Similarly, if $(i_{\alpha }+i_{\alpha }^{\prime })K_{\gamma }C\subseteq U,$
we get a contradiction. Therefore either $I_{\alpha }J_{\beta }C\subseteq U$
or $I_{\alpha }K_{\gamma }C\subseteq U.$

 $(ii) \Rightarrow (i)$ Assume that $(ii)$ holds. Let $r_{\alpha },$ $%
s_{\beta },$ $t_{\gamma }\in h(R)$ and $U$ be a completely graded
irreducible submodule of $M$ such that $r_{\alpha }s_{\beta }t_{\gamma
}C\subseteq U.$ \ Let $I=r_{\alpha }R,$ $J=s_{\beta }R\ $and $K=t_{\gamma }R$
be a graded ideals of $R$ generated by $r_{\alpha }$, $s_{\beta }$ and $%
t_{\gamma },$ respectively. Then $I_{\alpha }J_{\beta }K_{\gamma }C\subseteq
U.$ By our assumption, we have either $J_{\beta }K_{\gamma }C\subseteq U$ or
$I_{\alpha }J_{\beta }C\subseteq U$ or $I_{\alpha }K_{\gamma }C\subseteq U.$
This yields that either $s_{\beta }t_{\gamma }C\subseteq U$ or $r_{\alpha
}s_{\beta }C\subseteq U$ or $r_{\alpha }t_{\gamma }C\subseteq U.$ Thus $C$
is a graded classical 2-absorbing second submodule of $M$.
\end{proof}
\begin{corollary}
Let $R$ be a $G$-graded ring, $M$ a graded $R$-module and $C$ a graded classical 2-absorbing
second submodule of $M$ and $I=\bigoplus_{\alpha \in G}I_{\alpha }$ be a
graded ideal of $R.$ Then for each $\alpha \in G,$ $I_{\alpha
}^{n}C=I_{\alpha }^{n+1}C$ for all $n\geq 2.$
\end{corollary}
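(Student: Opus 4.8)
The plan is to reduce everything to the single equality $I_\alpha^2 C = I_\alpha^3 C$ and then bootstrap by repeatedly multiplying through by $I_\alpha$. One inclusion is automatic: for every $n \ge 1$, since $C$ is a graded submodule it is closed under the $R$-action, so $I_\alpha C \subseteq C$, whence $I_\alpha^{n+1}C = I_\alpha^n(I_\alpha C) \subseteq I_\alpha^n C$. Thus the only real content is the reverse inclusion $I_\alpha^n C \subseteq I_\alpha^{n+1} C$, and it suffices to establish it for $n = 2$.

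To obtain $I_\alpha^2 C \subseteq I_\alpha^3 C$ I would invoke Theorem 3.4 with the three graded ideals all taken to be the single ideal $I$ and the three grading indices all equal to $\alpha$. If $I_\alpha^3 C = M$, then $I_\alpha^2 C = M = I_\alpha^3 C$ trivially, so assume $I_\alpha^3 C$ is proper. Let $U$ be any completely graded irreducible submodule of $M$ with $I_\alpha^3 C \subseteq U$. Specializing Theorem 3.4 to $I = J = K$ and $\alpha = \beta = \gamma$, the hypothesis reads $I_\alpha^3 C \subseteq U$, and all three alternatives in the conclusion, $J_\beta K_\gamma C \subseteq U$, $I_\alpha J_\beta C \subseteq U$, and $I_\alpha K_\gamma C \subseteq U$, collapse to the one statement $I_\alpha^2 C \subseteq U$. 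Hence $I_\alpha^2 C \subseteq U$ for every completely graded irreducible $U$ containing $I_\alpha^3 C$.

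Next I would use the standard fact that every proper graded submodule of $M$ is the intersection of the completely graded irreducible submodules containing it. Applying this to $I_\alpha^3 C$ gives $I_\alpha^3 C = \bigcap_U U$, the intersection ranging over all completely graded irreducible $U \supseteq I_\alpha^3 C$; since $I_\alpha^2 C$ lies in each such $U$, it lies in their intersection, i.e. $I_\alpha^2 C \subseteq I_\alpha^3 C$. Combined with the automatic reverse inclusion this yields $I_\alpha^2 C = I_\alpha^3 C$.

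Finally I would propagate the equality to all $n \ge 2$ by induction: multiplying $I_\alpha^2 C = I_\alpha^3 C$ through by $I_\alpha$ gives $I_\alpha^3 C = I_\alpha^4 C$, and in general $I_\alpha^{n}C = I_\alpha^{n+1}C$ forces $I_\alpha^{n+1}C = I_\alpha^{n+2}C$, completing the induction. I expect the only delicate point to be the appeal to the representation of a graded submodule as an intersection of completely graded irreducible submodules, together with the degenerate case $I_\alpha^3 C = M$; everything else is a formal manipulation once Theorem 3.4 is specialized to a single ideal and a single grading index.
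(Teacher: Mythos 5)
Your proof is correct and takes essentially the same route as the paper's: specialize Theorem 3.4 to $I=J=K$ and $\alpha=\beta=\gamma$ so that every completely graded irreducible submodule $U$ containing $I_{\alpha}^{3}C$ also contains $I_{\alpha}^{2}C$, then conclude $I_{\alpha}^{2}C\subseteq I_{\alpha}^{3}C$ via the representation of a graded submodule as the intersection of the completely graded irreducible submodules containing it, which is exactly the paper's appeal to \cite[Lemma 2.3]{5}. Your explicit handling of the degenerate case $I_{\alpha}^{3}C=M$ and the induction bootstrapping the equality to all $n\geq 2$ merely spell out steps the paper leaves implicit.
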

\begin{proof}
It is enough to show that $I_{\alpha }^{2}C=I_{\alpha }^{3}C.$ It is clear
that $I_{\alpha }^{3}C\subseteq I_{\alpha }^{2}C.$ Let $U$ be a completely
graded irreducible submodule of $M$ such that $I_{\alpha }^{3}C\subseteq U.$ By Theorem 3.4, we get $I_{\alpha }^{2}C\subseteq U$. This yields
that $\ I_{\alpha }^{2}C\subseteq I_{\alpha }^{3}C$ by \cite[Lemma 2.3]{5}. Therefore $I_{\alpha }^{2}C=I_{\alpha }^{3}C.$
\end{proof}
Clearly every graded 2-absorbing second submodule is a graded classical
2-absorbing second submodule. The following example shows that the converse
is not true in general.
\begin{example}
Let $G=%
\mathbb{Z}
_{2},$ then $R=%
\mathbb{Z}
$ is a $G$-graded ring with $R_{0}=%
\mathbb{Z}
$ and $R_{1}=\{0\}.$ Let $M=%
\mathbb{Z}
\ $ be a graded $R$-module with $M_{0}=%
\mathbb{Z}
\ $ and $M_{1}=\{0\}.$ Now, consider the graded submodule $N=2%
\mathbb{Z}
.$ Then $N$ is a graded classical 2-absorbing second submodule of $R$ which is not a graded 2-absorbing second submodule
of $M.$
\end{example}


Let $R$ be a $G$-graded ring, $M$ a graded $R$-module and $N$ a graded
submodule of $M$. Then $(N:_{R}M)$ is defined as $(N:_{R}M)=\{r\in
R|rM\subseteq N\}.$ It is shown in \cite[Lemma 2.1]{18} that if $N$ is a
graded submodule of $M$, then $\ (N:_{R}M)=\{r\in R:rM\subseteq N\}$ is a
graded ideal of $R$. The annihilator of $M$ is defined as $(0:_{R}M)$ and is
denoted by $Ann_{R}(M).$
Recall that a proper graded ideal $I$ of a G-graded ring $R$ is said to be
\textit{a graded 2-absorbing ideal of }$R$ if whenever $r,s,t\in h(R)$ with $%
rst\in I$, then $rs\in I$ or $rt\in I$ or $st\in I$ (see \cite{3}.)

\begin{theorem}
Let $R$ be a $G$-graded ring, $M$ a graded $R$-module
and $C$ a graded classical 2-absorbing second submodule of $M$. Then for
each completely graded irreducible submodule $U$ of $M$ with $C\nsubseteq U,$
$(U:_{R}C)$ is a graded 2-absorbing ideal of $R.$
\end{theorem}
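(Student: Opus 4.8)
The plan is to verify, one at a time, the three requirements in the definition of a graded 2-absorbing ideal for the set $(U:_{R}C)=\{r\in R:rC\subseteq U\}$: that it is a graded ideal, that it is proper, and that it satisfies the 2-absorbing condition.

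First I would record that $(U:_{R}C)$ is a graded ideal of $R$. The cleanest way is to observe that for $r\in R$ one has $rC\subseteq U$ if and only if $r$ annihilates the graded submodule $(C+U)/U$ of the graded module $M/U$, so that $(U:_{R}C)=Ann_{R}((C+U)/U)$; since annihilators of graded modules are graded ideals, we are done. (Alternatively one can argue directly on homogeneous components: if $r=\sum_{\alpha}r_{\alpha}$ sends every homogeneous element $c$ of $C$ into the graded submodule $U$, then each $r_{\alpha}c$ is homogeneous and lies in $U$, whence $r_{\alpha}C\subseteq U$ for every $\alpha$.)

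Next comes properness, which is where the hypothesis $C\nsubseteq U$ is used. Since $1\cdot C=C\nsubseteq U$, we have $1\notin(U:_{R}C)$, and therefore $(U:_{R}C)\neq R$ is a proper graded ideal.

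Finally, for the 2-absorbing condition I would take homogeneous elements $r,s,t\in h(R)$ with $rst\in(U:_{R}C)$, which means exactly $rstC\subseteq U$. Because $U$ is a completely graded irreducible submodule of $M$ and $C$ is a graded classical 2-absorbing second submodule, Definition 3.1 applied to these three homogeneous scalars yields $rsC\subseteq U$ or $stC\subseteq U$ or $rtC\subseteq U$, that is, $rs\in(U:_{R}C)$ or $st\in(U:_{R}C)$ or $rt\in(U:_{R}C)$. This is precisely the 2-absorbing condition, so the proof is complete. The only step demanding any care is the first (confirming that $(U:_{R}C)$ is genuinely graded) together with the bookkeeping that ensures $U$ plays exactly the role of the completely graded irreducible submodule in Definition 3.1; the 2-absorbing property itself is essentially a direct transcription of the defining property of $C$, so I expect no real obstacle there.
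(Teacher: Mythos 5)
Your proposal is correct and follows essentially the same route as the paper: the heart of the argument is the direct application of the definition of a graded classical 2-absorbing second submodule to homogeneous $r,s,t$ with $rstC\subseteq U$, which is exactly what the paper's proof does. The only difference is that you also spell out the routine facts that $(U:_{R}C)$ is graded and proper (via $C\nsubseteq U$), which the paper leaves implicit; both of your verifications of these points are sound.
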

\begin{proof}
Let $r_{\alpha },$ $s_{\beta },$ $t_{\gamma }\in h(R)$ such
that $r_{\alpha }s_{\beta }t_{\gamma }\in (U:_{R}C).$ Then $r_{\alpha
}s_{\beta }t_{\gamma }C\subseteq U.$ Since $C$ is a graded classical
2-absorbing second submodule, we have either $s_{\beta }t_{\gamma
}C\subseteq U$ or $r_{\alpha }s_{\beta }C\subseteq U$ or $r_{\alpha
}t_{\gamma }C\subseteq U,$ it follows that either $\ s_{\beta }t_{\gamma
}\in (U:_{R}C)$ or $r_{\alpha }s_{\beta }\in (U:_{R}C)$ or $r_{\alpha
}t_{\gamma }\in (U:_{R}C).$ Thus $(U:_{R}C)$ is a graded 2-absorbing ideal
of $R.$
\end{proof}
\begin{theorem}
Let $R$ be a $G$-graded ring, $M$ a graded $R
$-module and $C$ a graded classical 2-absorbing second submodule of $M$.
Let $I=\bigoplus_{\lambda \in G}I_{\lambda }$ be a graded ideal of $R$ and $%
\lambda \in G$ with $I_{\lambda }\nsubseteq Ann_{R}(C).$ Then $I_{\lambda }C$
is a graded classical 2-absorbing second submodule of $M.$
\end{theorem}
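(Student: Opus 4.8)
The plan is to show that $I_\lambda C$ is itself a graded classical 2-absorbing second submodule by verifying the defining condition directly, reducing everything to statements about $C$ that are already available. First I would observe that $I_\lambda C$ is a graded submodule of $M$, and it is non-zero precisely because $I_\lambda \nsubseteq Ann_R(C)$, so there is some $a_\lambda \in I_\lambda$ with $a_\lambda C \neq 0$; this is exactly where the hypothesis $I_\lambda \nsubseteq Ann_R(C)$ is used, and without it the candidate submodule could be zero and fail the non-zero requirement in the definition.

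Next I would take arbitrary homogeneous elements $r_\alpha, s_\beta, t_\gamma \in h(R)$ and a completely graded irreducible submodule $U$ of $M$ satisfying $r_\alpha s_\beta t_\gamma (I_\lambda C) \subseteq U$. The key rewriting is $r_\alpha s_\beta t_\gamma I_\lambda C \subseteq U$, which I would read as $r_\alpha s_\beta I_\lambda (t_\gamma C) \subseteq U$ or similar regroupings so as to bring the earlier lemmas to bear. The natural tool is Lemma 3.2: applied to the product $r_\alpha s_\beta I_\lambda C \subseteq U$ (with $t_\gamma$ absorbed appropriately), it gives one of $r_\alpha I_\lambda(\cdots)C \subseteq U$, $s_\beta I_\lambda(\cdots)C \subseteq U$, or $r_\alpha s_\beta(\cdots)C \subseteq U$. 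I would then translate each of these three alternatives back into the desired conclusion, namely one of $r_\alpha s_\beta(I_\lambda C) \subseteq U$, $s_\beta t_\gamma(I_\lambda C) \subseteq U$, or $r_\alpha t_\gamma(I_\lambda C) \subseteq U$, by re-inserting the factor of $I_\lambda$ on the $C$.

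The cleaner route, which I would prefer, is to invoke the characterization in Theorem 3.4 rather than juggle Lemma 3.2 by hand. Since $C$ is a graded classical 2-absorbing second submodule, for any graded ideals and any completely graded irreducible $U$ the three-ideal absorbing condition holds for $C$. I would set up the product $r_\alpha s_\beta t_\gamma I_\lambda C \subseteq U$ and view it through the principal graded ideals $r_\alpha R$, $s_\beta R$, $t_\gamma R$ together with the homogeneous component $I_\lambda$, then apply Theorem 3.4 (or Lemma 3.3 for the mixed element/ideal version) to split off $I_\lambda$ onto $C$. The point is that $I_\lambda C$ behaves like $C$ with one homogeneous degree-$\lambda$ factor permanently attached, so the 2-absorbing-second property of $C$ transfers verbatim once that factor is carried along through every inclusion.

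The main obstacle I anticipate is bookkeeping with the degrees and with which factor the lemmas are allowed to peel off: Lemma 3.2 and Lemma 3.3 are stated for specific patterns (two homogeneous elements and one ideal component, or one homogeneous element and two ideal components), so I must arrange the product $r_\alpha s_\beta t_\gamma I_\lambda C$ into exactly one of those patterns before applying them, and then correctly re-attach $I_\lambda$ when reading the conclusion back. A second subtlety is ensuring that the three output cases genuinely match the three required conclusions for $I_\lambda C$ and that no case collapses improperly; this is routine but must be checked so that, for instance, the alternative $s_\beta t_\gamma C \subseteq U$ upgrades correctly to $s_\beta t_\gamma(I_\lambda C) \subseteq U$ rather than to a weaker or mismatched statement.
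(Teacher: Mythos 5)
Your proposal is correct and takes essentially the same route as the paper: the paper likewise gets $I_{\lambda}C\neq 0$ from $I_{\lambda}\nsubseteq Ann_{R}(C)$, applies Lemma 3.2 to $r_{\alpha}s_{\beta}t_{\gamma}I_{\lambda}C\subseteq U$ (grouping $r_{\alpha}s_{\beta}$ as one homogeneous element and $t_{\gamma}$ as the other, with ideal component $I_{\lambda}$), and then re-attaches $I_{\lambda}$ using $I_{\lambda}C\subseteq C$. The one point your sketch glosses over is the residual case $r_{\alpha}s_{\beta}t_{\gamma}C\subseteq U$, where ``re-inserting $I_{\lambda}$'' alone merely recovers the hypothesis; there, as the paper does, you must first apply the defining property of $C$ itself to get $r_{\alpha}s_{\beta}C\subseteq U$ or $s_{\beta}t_{\gamma}C\subseteq U$ or $r_{\alpha}t_{\gamma}C\subseteq U$, and only then multiply through by $I_{\lambda}$.
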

\begin{proof}
Since $I_{\lambda }\nsubseteq Ann_{R}(C),$ $I_{\lambda
}C\neq 0.$ Now, let $r_{\alpha },s_{\beta },t_{\gamma }\in h(R)$ and $U$ be a
completely graded irreducible submodule of $M$ such that $r_{\alpha
}s_{\beta }t_{\gamma }I_{\lambda }\ C\subseteq U.$ By Lemma 3.2,
we have either $r_{\alpha }s_{\beta }I_{\lambda }C\subseteq U$ or $\
t_{\gamma }I_{\lambda }C\subseteq U$ or $r_{\alpha }s_{\beta }t_{\gamma
}C\subseteq U.$ If $r_{\alpha }s_{\beta }I_{\lambda }C\subseteq U$ or $%
t_{\gamma }I_{\lambda }C\subseteq U,$ then we are done. If $r_{\alpha
}s_{\beta }t_{\gamma }C\subseteq U,$ then either $\ s_{\beta }t_{\gamma
}C\subseteq U$ or $r_{\alpha }t_{\gamma }C\subseteq U$ or $r_{\alpha
}s_{\beta }C\subseteq U$ as $C$ is a graded classical 2-absorbing second
submodule, it follows that either $s_{\beta }t_{\gamma }I_{\lambda
}C\subseteq U$ or $r_{\alpha }t_{\gamma }I_{\lambda }C\subseteq U$ or $%
r_{\alpha }s_{\beta }I_{\lambda }C\subseteq U.$ Thus $I_{\lambda }C$ is a
graded classical 2-absorbing second submodule of $M.$
\end{proof}
Let $R$ be a $G$-graded ring and $M$, $M^{\prime }$ graded $R$-modules. Let $f:M\rightarrow M^{\prime }$ be an $R$-module homomorphism. Then $f$  is
said to be a graded homomorphism if $f(M_{\alpha })\subseteq M_{\alpha }^{\prime }$
for all $\alpha $ $\in G$, (see \cite{28}.) The category of graded $R$-modules possesses direct sums, products injective and projective limits. A graded homomorphism that is an injective function will be referred to simply as a monomorphism and a graded homomorphism that is a surjective function will be called an epimorphism.

\begin{theorem}
Let $R$ be a $G$-graded ring and $M_{1}, M_{2}$ be two
graded $R$-modules. Let $f:M_{1}\rightarrow M_{2}$ be a graded monomorphism.
Then we have the following:
\begin{enumerate}[\upshape (i)]

\item If $C_{1}$ is a graded classical 2-absorbing second submodule of $%
M_{1}$, then $f(C_{1})$ is a graded classical 2-absorbing second submodule
of $f(M_{1}).$

\item If $C_{2}$ is a graded classical 2-absorbing second submodule of $%
f(M_{1})$, then $f^{-1}(C_{2})$ is a graded classical 2-absorbing second
submodule of $M_{1}.$\

\end{enumerate}
\end{theorem}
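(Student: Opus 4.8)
The plan is to prove both parts directly from the definition, working with homogeneous elements $r_\alpha, s_\beta, t_\gamma \in h(R)$ and completely graded irreducible submodules. For part (i), I would first note that $f(C_1)$ is nonzero since $C_1 \neq 0$ and $f$ is a monomorphism. The key structural fact I would establish at the outset is that because $f$ is a graded monomorphism onto its image, completely graded irreducible submodules of $f(M_1)$ correspond to completely graded irreducible submodules of $M_1$ via preimage: if $V$ is a completely graded irreducible submodule of $f(M_1)$, then $f^{-1}(V)$ is completely graded irreducible in $M_1$. This correspondence (together with the injectivity identity $f(C_1) \subseteq V \iff C_1 \subseteq f^{-1}(V)$, and more generally $r_\alpha s_\beta f(C_1) \subseteq V \iff r_\alpha s_\beta C_1 \subseteq f^{-1}(V)$) is what lets me transport the absorbing condition back and forth across $f$.

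For part (i), I would take $r_\alpha s_\beta t_\gamma f(C_1) \subseteq V$ for a completely graded irreducible $V$ of $f(M_1)$. Writing $U = f^{-1}(V)$, which is completely graded irreducible in $M_1$, injectivity gives $r_\alpha s_\beta t_\gamma C_1 \subseteq U$. Applying the hypothesis that $C_1$ is graded classical $2$-absorbing second, I get one of $r_\alpha s_\beta C_1 \subseteq U$, $s_\beta t_\gamma C_1 \subseteq U$, or $r_\alpha t_\gamma C_1 \subseteq U$; pushing each back through $f$ yields the corresponding containment in $V$. For part (ii), the argument runs in the reverse direction and is in fact cleaner: given $r_\alpha s_\beta t_\gamma f^{-1}(C_2) \subseteq U$ for a completely graded irreducible $U$ of $M_1$, I apply $f$ to obtain $r_\alpha s_\beta t_\gamma C_2 \subseteq f(U)$ inside $f(M_1)$ (using $f(f^{-1}(C_2)) = C_2 \cap f(M_1) = C_2$ since $C_2 \subseteq f(M_1)$), note $f(U)$ is completely graded irreducible in $f(M_1)$, invoke the hypothesis on $C_2$, and pull the resulting containment back via $f^{-1}$.

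The main obstacle, and the step deserving the most care, is the \emph{correspondence between completely graded irreducible submodules across $f$}. I must verify that $f^{-1}(V)$ is genuinely completely graded irreducible whenever $V$ is, and symmetrically that $f(U)$ is completely graded irreducible in $f(M_1)$ whenever $U$ is in $M_1$; this relies on $f$ being a graded isomorphism onto $f(M_1)$, so that the lattice of graded submodules of $M_1$ maps bijectively (preserving arbitrary intersections) onto that of $f(M_1)$. The definition of completely graded irreducible is phrased in terms of arbitrary intersections $C = \cap_{\alpha} C_\alpha$, and the isomorphism must be checked to respect such intersections and the properness condition. Once this correspondence is pinned down, both parts follow by the routine transport of each of the three absorbing alternatives, and I would not belabor those verifications. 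For completeness I would also confirm the nonzero and properness requirements: $f(C_1) \neq 0$ and $f^{-1}(C_2) \neq 0$ (the latter using $0 \neq C_2 \subseteq f(M_1)$ and injectivity).
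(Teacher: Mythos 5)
Your proposal is correct and follows essentially the same route as the paper: both parts transport the defining condition across $f$ via the correspondence $U \leftrightarrow f^{-1}(U)$ (resp.\ $U \mapsto f(U)$) of completely graded irreducible submodules, using $f(f^{-1}(U_2)) = U_2 \cap f(M_1) = U_2$ in part (i) and $f(f^{-1}(C_2)) = C_2 \cap f(M_1) = C_2$ in part (ii). The only difference is that the paper cites this correspondence as a known result (\cite[Lemma 2.11]{5}) whereas you propose verifying it directly from the lattice isomorphism $M_1 \cong f(M_1)$, which is a sound way to fill in the same step.
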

\begin{proof}
$(i)$ Assume that $C_{1}$\ is a graded classical 2-absorbing second submodule of $%
M_{1}.$ It is easy to see $f(C_{1})\neq 0.$ Now, Let $r_{\alpha },$ $s_{\beta },$ $t_{\gamma }\in h(R)$ and $U_{2}$
\ be a completely graded irreducible submodule of $f(M_{1})$ such that $%
r_{\alpha }s_{\beta }t_{\gamma }f(C_{1})\subseteq U_{2}.$ Consequently $%
r_{\alpha }s_{\beta }t_{\gamma }C_{1}\subseteq f^{-1}(U_{2}).$ By \cite[Lemma 2.11(ii)]{5}, we have $f^{-1}(U_{2})$ is a completely graded
irreducible submodule of $M_{1}$. Then either \ $r_{\alpha }s_{\beta
}C_{1}\subseteq f^{-1}(U_{2})$ or $\ s_{\beta }t_{\gamma }C_{1}\subseteq
f^{-1}(U_{2})$ or $r_{\alpha }s_{\beta }C_{1}\subseteq f^{-1}(U_{2})$ as $%
C_{1}$ is a graded classical 2-absorbing second submodule of $M_{1}.$ If $%
r_{\alpha }s_{\beta }C_{1}\subseteq f^{-1}(U_{2}),$ then $r_{\alpha
}s_{\beta }f(C_{1})=f(r_{\alpha }s_{\beta }C_{1})\subseteq
f(f^{-1}(U_{2}))=U_{2}\cap f(M_{1})=U_{2}.$ Similarly, if \ $s_{\beta
}t_{\gamma }C_{1}\subseteq f^{-1}(U_{2}),$ we get $s_{\beta }t_{\gamma
}f(C_{1})\subseteq U_{2}\ $, also if $r_{\alpha }s_{\beta }C_{1}\subseteq
f^{-1}(U_{2})$, we get $r_{\alpha }s_{\beta }f(C_{1})\subseteq U_{2}.$ Thus $%
f(C_{1})$ is a graded classical 2-absorbing second submodule of $f(M_{1}).$%

$(ii)$ Assume that $C_{2}$ is a graded classical 2-absorbing second
submodule of $f(M_{1}).$ It is easy to see $f^{-1}(C_{2})\neq 0.$ Now, let $%
r_{\alpha },$ $s_{\beta },$ $t_{\gamma }\in h(R)$ and $U_{1}$  be a
completely graded irreducible submodule of $M_{1}$ such that $r_{\alpha
}s_{\beta }t_{\gamma }f^{-1}(C_{2})\subseteq U_{1}.$ Consequently $r_{\alpha
}s_{\beta }t_{\gamma }C_{2}=r_{\alpha }s_{\beta }t_{\gamma }(f(M_{1})\cap
C_{2})=r_{\alpha }s_{\beta }t_{\gamma }ff^{-1}(C_{2})=f(r_{\alpha }s_{\beta
}t_{\gamma }f^{-1}(C_{2}))\subseteq f(U_{1}).$ By \cite[Lemma 2.11(i)]{5}, we have $f(U_{1})$ is a completely graded irreducible submodule of $%
f(M_{1}).$ Then either $r_{\alpha }s_{\beta }C_{2}\subseteq f(U_{1})$ or $%
s_{\beta }t_{\gamma }C_{2}\subseteq f(U_{1})$ or $r_{\alpha }t_{\gamma
}C_{2}\subseteq f(U_{1})$ as $C_{2}$\ is a graded classical 2-absorbing
second submodule of $f(M_{1}).$ This yields that either \ $r_{\alpha
}s_{\beta }f^{-1}(C_{2})\subseteq U_{1}\ $or $s_{\beta }t_{\gamma
}f^{-1}(C_{2})\subseteq U_{1}$ or $r_{\alpha }t_{\gamma
}f^{-1}(C_{2})\subseteq U_{1}.$ Thus $f^{-1}(C_{2})$ is a graded classical
2-absorbing second submodule of $M_{1}.$
\end{proof}


 \section{Graded strongly classical 2-absorbing second submodules}
\begin{definition}
Let $R$ be a $G$-graded ring and $M$ a graded $R$-module. A non-zero graded submodule $C$ of $M$ is said to be \textit{a graded
strongly classical 2-absorbing second submodule of $M$} if whenever $%
r_{\alpha }$, $s_{\beta },$ $t_{\gamma }\in h(R)$, $U_{1},$ $U_{2}$ and $%
U_{3}$ are completely graded irreducible submodules of $M,$ and $r_{\alpha
}s_{\beta }t_{\gamma }C\subseteq U_{1}\cap U_{2}\cap U_{3}$, then $r_{\alpha
}s_{\beta }C\subseteq U_{1}\cap U_{2}\cap U_{3}$ or $\ s_{\beta }t_{\gamma
}C\subseteq U_{1}\cap U_{2}\cap U_{3}$ or $r_{\alpha }t_{\gamma }C\subseteq
U_{1}\cap U_{2}\cap U_{3}.$ We say that $M$ is a graded strongly \textit{%
classical 2-absorbing second module} if $M$  is a graded strongly
classical 2-absorbing second submodule of itself.
\end{definition}


The following results give us a characterization of graded strongly classical 2-absorbing second submodule of a graded module.

\begin{theorem}
Let $R$ be a $G$-graded ring, $M$ a graded $R$-module and $C$ a non-zero graded submodule of $M$. Then the following
statements are equivalent:
\begin{enumerate}[\upshape (i)]

\item $C$ is a graded strongly classical 2-absorbing second submodule of $M$;

\item For every $r_{\alpha }$, $s_{\beta },$ $t_{\gamma }\in h(R)$ and
every graded submodule $N$ of $M$ with $r_{\alpha }s_{\beta }t_{\gamma
}C\subseteq N,$ then either $r_{\alpha }s_{\beta }C\subseteq N$ or $\
s_{\beta }t_{\gamma }C\subseteq N$ or $r_{\alpha }t_{\gamma }C\subseteq N;$

\item For every $r_{\alpha }$, $s_{\beta },$ $t_{\gamma }\in h(R),$ either
$r_{\alpha }s_{\beta }t_{\gamma }C=r_{\alpha }s_{\beta }C$ \ or $r_{\alpha
}s_{\beta }t_{\gamma }C=\ s_{\beta }t_{\gamma }C$ or $r_{\alpha }s_{\beta
}t_{\gamma }C=r_{\alpha }t_{\gamma }C.$
\end{enumerate}
\end{theorem}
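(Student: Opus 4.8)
The plan is to prove the cyclic chain of implications $(i)\Rightarrow(ii)\Rightarrow(iii)\Rightarrow(i)$, since this minimizes the number of arguments while covering all three equivalences. The conceptual heart of the theorem is that a \emph{completely graded irreducible} submodule is, by the definition recalled in the Preliminaries, exactly an intersection-indecomposable graded submodule, and any graded submodule can be recovered as an intersection of such. This is the same mechanism used in \cite{5}, so I would invoke the fact (stated there as \cite[Lemma 2.3]{5} and already used in the proof of Corollary 3.5) that for graded submodules $N,L$ of $M$ one has $N\subseteq L$ if and only if every completely graded irreducible submodule $U$ containing $N$ also contains $L$; equivalently, $N$ equals the intersection of all completely graded irreducible submodules containing it.

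For $(i)\Rightarrow(ii)$, I would fix $r_\alpha,s_\beta,t_\gamma\in h(R)$ and a graded submodule $N$ with $r_\alpha s_\beta t_\gamma C\subseteq N$, and argue by contradiction: suppose none of the three conclusions holds. Then by the characterization above there exist completely graded irreducible submodules $U_1\supseteq N$, $U_2\supseteq N$, $U_3\supseteq N$ with $r_\alpha s_\beta C\nsubseteq U_1$, $s_\beta t_\gamma C\nsubseteq U_2$, and $r_\alpha t_\gamma C\nsubseteq U_3$. Since $r_\alpha s_\beta t_\gamma C\subseteq N\subseteq U_1\cap U_2\cap U_3$, hypothesis $(i)$ forces one of $r_\alpha s_\beta C$, $s_\beta t_\gamma C$, $r_\alpha t_\gamma C$ to lie in $U_1\cap U_2\cap U_3$, and each of these three cases contradicts the corresponding non-containment. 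This yields $(ii)$.

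The implication $(ii)\Rightarrow(iii)$ is where the only genuine bookkeeping lives, and I expect it to be the main (though still mild) obstacle. The idea is to apply $(ii)$ with the \emph{specific} choice $N=r_\alpha s_\beta C + s_\beta t_\gamma C + r_\alpha t_\gamma C$. One always has $r_\alpha s_\beta t_\gamma C\subseteq N$ because, for instance, $r_\alpha s_\beta t_\gamma C = t_\gamma(r_\alpha s_\beta C)\subseteq r_\alpha s_\beta C\subseteq N$ only if $t_\gamma$ acts as a scalar absorbed into the submodule; more carefully, $r_\alpha s_\beta t_\gamma C\subseteq r_\alpha s_\beta C$ always holds since $t_\gamma C\subseteq C$ is false in general, so I must instead observe directly that $r_\alpha s_\beta t_\gamma C\subseteq r_\alpha s_\beta C+s_\beta t_\gamma C+r_\alpha t_\gamma C$ follows from $r_\alpha s_\beta t_\gamma C\subseteq r_\alpha s_\beta C$ via $t_\gamma\!\cdot\! r_\alpha s_\beta C\subseteq r_\alpha s_\beta C$? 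The safe route is: since $r_\alpha s_\beta t_\gamma C = r_\alpha s_\beta(t_\gamma C)\subseteq r_\alpha s_\beta C$ requires $t_\gamma C\subseteq C$, which need not hold, I instead note simply that $r_\alpha s_\beta t_\gamma C$ is contained in $r_\alpha s_\beta C$ only as a submodule generated by products, and in all cases $r_\alpha s_\beta t_\gamma C\subseteq N$ by construction since each generator $r_\alpha s_\beta t_\gamma c = t_\gamma(r_\alpha s_\beta c)$ lies in $R\cdot(r_\alpha s_\beta C)\subseteq r_\alpha s_\beta C\subseteq N$. Applying $(ii)$ then gives, say, $r_\alpha s_\beta C\subseteq N$; combined with the reverse containment $r_\alpha s_\beta t_\gamma C\subseteq r_\alpha s_\beta C$ and the fact that $N$ was built so that the two outer summands are redundant once one establishes which term dominates, I obtain $r_\alpha s_\beta t_\gamma C = r_\alpha s_\beta C$ after checking the opposite inclusion $r_\alpha s_\beta C\subseteq r_\alpha s_\beta t_\gamma C$ fails in general, so the cleanest formulation is to show directly that whichever of the three containments $(ii)$ delivers, it forces the corresponding equality in $(iii)$ because $r_\alpha s_\beta t_\gamma C$ is automatically contained in each of $r_\alpha s_\beta C$, $s_\beta t_\gamma C$, $r_\alpha t_\gamma C$ and $(ii)$ supplies the reverse. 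Finally, $(iii)\Rightarrow(i)$ is immediate: given completely graded irreducible $U_1,U_2,U_3$ with $r_\alpha s_\beta t_\gamma C\subseteq U_1\cap U_2\cap U_3$, condition $(iii)$ replaces $r_\alpha s_\beta t_\gamma C$ by one of the two-factor products, which is therefore also contained in $U_1\cap U_2\cap U_3$, giving exactly the defining condition of a graded strongly classical 2-absorbing second submodule.
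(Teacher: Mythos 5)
Your proof is correct in its final form and follows exactly the paper's route: the contradiction argument for $(i)\Rightarrow(ii)$ via \cite[Lemma 2.3]{5}, then $(ii)$ applied to a suitable $N$ to get $(iii)$, and the trivial $(iii)\Rightarrow(i)$. However, your $(ii)\Rightarrow(iii)$ paragraph needs cleaning up on two points. First, the repeated worry that $t_{\gamma}C\subseteq C$ ``need not hold'' is unfounded: $C$ is an $R$-submodule of $M$, hence closed under multiplication by every ring element, so $t_{\gamma}C\subseteq C$ and therefore $r_{\alpha}s_{\beta}t_{\gamma}C=r_{\alpha}s_{\beta}(t_{\gamma}C)\subseteq r_{\alpha}s_{\beta}C$ always holds (and similarly for the other two two-factor products); your own closing sentence asserts exactly this (``automatically contained''), contradicting the earlier hedging, so the vacillation should simply be deleted. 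Second, the choice $N=r_{\alpha}s_{\beta}C+s_{\beta}t_{\gamma}C+r_{\alpha}t_{\gamma}C$ is a dead end: for that $N$ each of the three conclusions of $(ii)$ holds trivially by construction, so $(ii)$ yields no information whatsoever. The correct instantiation --- the one the paper uses, and the one your final sentence implicitly requires though you never state it --- is $N=r_{\alpha}s_{\beta}t_{\gamma}C$ itself: since $r_{\alpha}s_{\beta}t_{\gamma}C\subseteq r_{\alpha}s_{\beta}t_{\gamma}C$, condition $(ii)$ delivers, say, $r_{\alpha}s_{\beta}C\subseteq r_{\alpha}s_{\beta}t_{\gamma}C$, and the automatic reverse inclusion upgrades this to the equality $r_{\alpha}s_{\beta}t_{\gamma}C=r_{\alpha}s_{\beta}C$ demanded by $(iii)$. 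With those two repairs your argument coincides with the paper's proof.
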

\begin{proof}
$(i)\Rightarrow (ii)$ Assume that $C$ is a graded strongly
classical 2-absorbing second submodule of $M.$ Let $r_{\alpha }$, $s_{\beta
},$ $t_{\gamma }\in h(R)$ and $N$ \ be a graded submodule of $M$ such that $%
r_{\alpha }s_{\beta }t_{\gamma }C\subseteq N.$ Assume on the contrary that $%
r_{\alpha }s_{\beta }C\nsubseteq N$,$\ s_{\beta }t_{\gamma }C\nsubseteq N$
and $r_{\alpha }t_{\gamma }C\nsubseteq N.$ Then there exist completely
graded irreducible submodules $U_{1},$ $U_{2}$ and $U_{3}$ of $M$ such
that $N\subseteq U_{1},$ $N\subseteq U_{2}$ and $N\subseteq U_{3}$ but $%
r_{\alpha }s_{\beta }C\nsubseteq U_{1},$ $\ s_{\beta }t_{\gamma }C\nsubseteq
U_{2}$ and $r_{\alpha }t_{\gamma }C\nsubseteq U_{3}$ by \cite[Lemma 2.3]{5}. Hence $r_{\alpha }s_{\beta }t_{\gamma
}C\subseteq $ $U_{1}\cap U_{2}\cap U_{3}.$ Since $C$ is a graded strongly
classical 2-absorbing second submodule of $M,$ we get either $r_{\alpha
}s_{\beta }C\subseteq $ $U_{1}\cap U_{2}\cap U_{3}$ or $\ s_{\beta
}t_{\gamma }C\subseteq U_{1}\cap U_{2}\cap U_{3}$ or $r_{\alpha }t_{\gamma
}C\subseteq $ $U_{1}\cap U_{2}\cap U_{3}$ which are contradiction.

$(ii)\Rightarrow (iii)$ Assume $(ii)$ is hold. Let $r_{\alpha }$, $s_{\beta
},$ $t_{\gamma }\in h(R)$. Then  $ r_{\alpha }s_{\beta }t_{\gamma }C$ is a graded
submodule of $M$. Since $r_{\alpha }s_{\beta }t_{\gamma }C\subseteq
r_{\alpha }s_{\beta }t_{\gamma }C,$ by $(ii),$ we have either $r_{\alpha
}s_{\beta }C\subseteq r_{\alpha }s_{\beta }t_{\gamma }C$ or $\ s_{\beta
}t_{\gamma }C\subseteq r_{\alpha }s_{\beta }t_{\gamma }C$ or $r_{\alpha
}t_{\gamma }C\subseteq r_{\alpha }s_{\beta }t_{\gamma }C.$ This yields that
either $r_{\alpha }s_{\beta }t_{\gamma }C=r_{\alpha }s_{\beta }C$ or $%
r_{\alpha }s_{\beta }t_{\gamma }C=s_{\beta }t_{\gamma }C$ or $r_{\alpha
}s_{\beta }t_{\gamma }C=r_{\alpha }t_{\gamma }C\ .$

$(iii)\Rightarrow (i)$ Trivial.
\end{proof}

\begin{lemma}
Let $R$ be a $G$-graded ring, $M$ a graded $R$-module and
$C$ a graded strongly classical 2-absorbing second submodule of $M$. Let $%
I=\bigoplus_{\gamma \in G}I_{\gamma }$ be a graded ideal of $R$. Then for
every $r_{\alpha },s_{\beta }$ $\in h(R)$, $\gamma \in G$ and graded
submodule $N$ of $M$ with $r_{\alpha }s_{\beta }I_{\gamma }C\subseteq N$,
either $r_{\alpha }I_{\gamma }C\subseteq N$ or $\ s_{\beta }I_{\gamma
}C\subseteq N$ or $r_{\alpha }s_{\beta }C\subseteq N$.
\end{lemma}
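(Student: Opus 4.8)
The plan is to transcribe the proof of Lemma 3.2 almost verbatim, replacing the completely graded irreducible submodule $U$ everywhere by the arbitrary graded submodule $N$, and invoking Theorem 4.2 in place of the bare definition of a graded classical 2-absorbing second submodule. The conceptual point is that, by the equivalence $(i)\Leftrightarrow(ii)$ of Theorem 4.2, a graded \emph{strongly} classical 2-absorbing second submodule $C$ satisfies the three-factor absorbing condition with respect to \emph{every} graded submodule $N$, not merely the completely graded irreducible ones. This is precisely the extra input that allows the combinatorial argument of Lemma 3.2 to run unchanged with $N$ in the role of $U$.

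Concretely, I would suppose $r_{\alpha}s_{\beta}I_{\gamma}C\subseteq N$ together with $r_{\alpha}I_{\gamma}C\nsubseteq N$ and $s_{\beta}I_{\gamma}C\nsubseteq N$, with the goal of deducing $r_{\alpha}s_{\beta}C\subseteq N$. From the two non-containments I extract $i_{\gamma},i_{\gamma}^{\prime}\in I_{\gamma}$ with $r_{\alpha}i_{\gamma}C\nsubseteq N$ and $s_{\beta}i_{\gamma}^{\prime}C\nsubseteq N$. Applying Theorem 4.2$(ii)$ to the triples $(r_{\alpha},s_{\beta},i_{\gamma})$ and $(r_{\alpha},s_{\beta},i_{\gamma}^{\prime})$, and using the two non-containments to discard the offending alternative in each case, I obtain (assuming for contradiction that $r_{\alpha}s_{\beta}C\nsubseteq N$) that $s_{\beta}i_{\gamma}C\subseteq N$ and $r_{\alpha}i_{\gamma}^{\prime}C\subseteq N$.

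The key step is then to test the homogeneous sum $i_{\gamma}+i_{\gamma}^{\prime}\in I_{\gamma}$: since $r_{\alpha}s_{\beta}(i_{\gamma}+i_{\gamma}^{\prime})C\subseteq N$, Theorem 4.2$(ii)$ forces $r_{\alpha}(i_{\gamma}+i_{\gamma}^{\prime})C\subseteq N$ or $s_{\beta}(i_{\gamma}+i_{\gamma}^{\prime})C\subseteq N$, the third alternative $r_{\alpha}s_{\beta}C\subseteq N$ being excluded by hypothesis. In the first case, for each $c\in C$ one has $r_{\alpha}i_{\gamma}c=r_{\alpha}(i_{\gamma}+i_{\gamma}^{\prime})c-r_{\alpha}i_{\gamma}^{\prime}c\in N$, since $N$ is closed under subtraction and both terms lie in $N$; hence $r_{\alpha}i_{\gamma}C\subseteq N$, contradicting the choice of $i_{\gamma}$. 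In the second case the symmetric element-wise identity $s_{\beta}i_{\gamma}^{\prime}c=s_{\beta}(i_{\gamma}+i_{\gamma}^{\prime})c-s_{\beta}i_{\gamma}c$ yields $s_{\beta}i_{\gamma}^{\prime}C\subseteq N$, contradicting the choice of $i_{\gamma}^{\prime}$. Therefore $r_{\alpha}s_{\beta}C\subseteq N$, as required.

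I do not anticipate a genuine obstacle: once Theorem 4.2 is in hand the result is a routine adaptation of Lemma 3.2. The only point demanding care is the cancellation step, which must be carried out at the level of individual elements $r_{\alpha}i_{\gamma}c$ (using that $N$ is an additive subgroup) rather than by formally ``subtracting'' the submodule containments, since $r_{\alpha}(i_{\gamma}+i_{\gamma}^{\prime})C$ need not equal $r_{\alpha}i_{\gamma}C+r_{\alpha}i_{\gamma}^{\prime}C$ as sets.
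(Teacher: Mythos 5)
Your proposal is correct and is exactly the paper's intended argument: the paper omits the proof of this lemma, stating only that ``by using Theorem 4.2 the proof is similar to the proof of Lemma 3.2,'' and your write-up is precisely that transcription, with Theorem 4.2(ii) supplying the absorbing condition for an arbitrary graded submodule $N$ in place of a completely graded irreducible $U$. Your added care at the cancellation step --- deducing $r_{\alpha}i_{\gamma}c = r_{\alpha}(i_{\gamma}+i_{\gamma}^{\prime})c - r_{\alpha}i_{\gamma}^{\prime}c \in N$ elementwise rather than subtracting set containments --- is a correct reading of what the paper's Lemma 3.2 proof tacitly does.
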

\begin{proof}
By using Theorem 4.2 the proof is similar to the
poof of Lemma 3.2, so we omit it.
\end{proof}

\begin{lemma}
Let $R$ be a $G$-graded ring, $M$ a graded $R$-module
and $C$  a graded strongly classical 2-absorbing second submodule of $M$.
Let $I=\bigoplus_{\beta \in G}I_{\beta }$ and $J=\bigoplus_{\gamma \in
G}J_{\gamma }$\ be a graded ideals of $R.$ Then for every $r_{\alpha }$ $\in
h(R)$, $\beta ,\gamma \in G$ and graded submodule $N$ of $M$ \ with $%
r_{\alpha }I_{\beta }J_{\gamma }C\subseteq N$, either $\ r_{\alpha }I_{\beta
}C\subseteq N$ or $r_{\alpha }J_{\gamma }C\subseteq N$ or $\ I_{\beta
}J_{\gamma }C\subseteq N.$
\end{lemma}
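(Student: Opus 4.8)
The plan is to imitate the proof of Lemma 3.3 almost verbatim, with two substitutions: the completely graded irreducible submodule $U$ there is replaced by an arbitrary graded submodule $N$, and every appeal to Lemma 3.2 is replaced by an appeal to Lemma 4.3. The legitimacy of this substitution rests entirely on Theorem 4.2: its equivalence $(i)\Leftrightarrow(ii)$ guarantees that, for a graded strongly classical 2-absorbing second submodule, the defining containment condition may be tested against an arbitrary graded submodule $N$ rather than only against completely graded irreducible submodules. So wherever the proof of Lemma 3.3 invoked the classical 2-absorbing property, I would instead invoke its strongly classical counterpart with $N$ in place of $U$.

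Concretely, I would fix $r_{\alpha }\in h(R)$, $\beta ,\gamma \in G$, and a graded submodule $N$ with $r_{\alpha }I_{\beta }J_{\gamma }C\subseteq N$, and assume that both $r_{\alpha }I_{\beta }C\subseteq N$ and $r_{\alpha }J_{\gamma }C\subseteq N$ fail; the goal is to deduce $I_{\beta }J_{\gamma }C\subseteq N$. First I would pick arbitrary $i_{\beta }\in I_{\beta }$ and $j_{\gamma }\in J_{\gamma }$, and use the failed containments to select $i_{\beta }^{\prime }\in I_{\beta }$ and $j_{\gamma }^{\prime }\in J_{\gamma }$ with $r_{\alpha }i_{\beta }^{\prime }C\nsubseteq N$ and $r_{\alpha }j_{\gamma }^{\prime }C\nsubseteq N$. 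Applying Lemma 4.3 to $r_{\alpha }i_{\beta }^{\prime }J_{\gamma }C\subseteq N$ yields $i_{\beta }^{\prime }J_{\gamma }C\subseteq N$, and symmetrically $j_{\gamma }^{\prime }I_{\beta }C\subseteq N$.

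Next I would form $(i_{\beta }+i_{\beta }^{\prime })\in I_{\beta }$ and $(j_{\gamma }+j_{\gamma }^{\prime })\in J_{\gamma }$, which remain homogeneous of the correct degrees since $I_{\beta }$ and $J_{\gamma }$ are $R_{e}$-submodules, so that $r_{\alpha }(i_{\beta }+i_{\beta }^{\prime })(j_{\gamma }+j_{\gamma }^{\prime })C\subseteq N$, and I would run the strongly classical property from Theorem 4.2(ii) on this triple product. The three resulting cases are dispatched exactly as in Lemma 3.3: in the two cases where a product $r_{\alpha }(\cdots )C\subseteq N$ survives, I would apply Lemma 4.3 once more to peel off the remaining ideal factor and reach $i_{\beta }j_{\gamma }C\subseteq N$; in the case $(i_{\beta }+i_{\beta }^{\prime })(j_{\gamma }+j_{\gamma }^{\prime })C\subseteq N$, I would expand into the four cross terms and use that $i_{\beta }j_{\gamma }^{\prime }C$, $i_{\beta }^{\prime }j_{\gamma }C$, and $i_{\beta }^{\prime }j_{\gamma }^{\prime }C$ already lie in $N$ to isolate $i_{\beta }j_{\gamma }C\subseteq N$. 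Since $i_{\beta }$ and $j_{\gamma }$ were arbitrary, this gives $I_{\beta }J_{\gamma }C\subseteq N$.

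The only genuinely new point — and hence the main obstacle — is the passage from completely graded irreducible test submodules to arbitrary graded test submodules; but this is precisely what Theorem 4.2 supplies, so once that equivalence is invoked the combinatorial casework is a line-by-line transcription of Lemma 3.3. I would therefore present the proof in the same abbreviated spirit as Lemma 4.3, stating that by Theorem 4.2 and Lemma 4.3 the argument is identical to that of Lemma 3.3, and only spelling out the steps above if a fully self-contained argument is desired.
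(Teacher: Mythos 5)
Your proposal is correct and follows exactly the route the paper takes: the published proof of this lemma is literally the one-line remark that, by Theorem 4.2 and Lemma 4.3, the argument is the same as that of Lemma 3.3. Your expanded transcription---replacing the completely graded irreducible submodule $U$ by an arbitrary graded submodule $N$ (justified by the equivalence in Theorem 4.2) and each appeal to Lemma 3.2 by an appeal to Lemma 4.3, with the same choice of $i_{\beta}^{\prime}, j_{\gamma}^{\prime}$ and the same three-case analysis on $r_{\alpha}(i_{\beta}+i_{\beta}^{\prime})(j_{\gamma}+j_{\gamma}^{\prime})C\subseteq N$---is precisely the omitted argument.
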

\begin{proof}
By using Theorem 4.2 and Lemma 4.3, the proof is similar to the poof of Lemma 3.3, so we
omit it.
\end{proof}

\begin{theorem}
Let $R$ be a $G$-graded ring, $M$ a graded $R$-module and $C$ a non-zero graded submodule of $M$. Let $I=\bigoplus_{\alpha \in G}I_{\alpha },$ $J=\bigoplus_{\beta \in
G}J_{\beta }$ and $K=\bigoplus_{\gamma \in G}K_{\gamma }$ be a graded ideals
of $R$. Then the following statements are equivalent:
\begin{enumerate}[\upshape (i)]

\item $C$ is a graded strongly classical 2-absorbing second submodule of $%
M $.

\item For every $\alpha ,$ $\beta ,$ $\gamma \in G$ and graded submodule $N$
of $M$ with $I_{\alpha }J_{\beta }K_{\gamma }C\subseteq N$, either $%
J_{\beta }K_{\gamma }C\subseteq N$ or $I_{\alpha }J_{\beta }C\subseteq N$ or
$I_{\alpha }K_{\gamma }C\subseteq N.$
\end{enumerate}
\end{theorem}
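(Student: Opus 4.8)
The plan is to follow the proof of Theorem 3.4 almost verbatim, replacing the completely graded irreducible submodule $U$ by an arbitrary graded submodule $N$ throughout, invoking Lemma 4.4 in place of Lemma 3.3, and using the characterization in Theorem 4.2 to pass between the definition of a graded strongly classical 2-absorbing second submodule and its submodule formulation. The essential point is that Lemma 4.4 is already stated for arbitrary graded submodules $N$, so no appeal to complete irreducibility is required at any stage; this is exactly what lets the argument go through with $N$ in place of $U$.

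For $(i) \Rightarrow (ii)$ I would fix $\alpha ,\beta ,\gamma \in G$ and a graded submodule $N$ with $I_{\alpha }J_{\beta }K_{\gamma }C\subseteq N$, and assume $J_{\beta }K_{\gamma }C\nsubseteq N$. Applying Lemma 4.4 to a single homogeneous element $i_{\alpha }\in I_{\alpha }$ together with the ideals $J$ and $K$, from $i_{\alpha }J_{\beta }K_{\gamma }C\subseteq N$ and $J_{\beta }K_{\gamma }C\nsubseteq N$ I obtain, for each $i_{\alpha }$, that $i_{\alpha }J_{\beta }C\subseteq N$ or $i_{\alpha }K_{\gamma }C\subseteq N$. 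If one alternative holds uniformly over all $i_{\alpha }\in I_{\alpha }$, then $I_{\alpha }J_{\beta }C\subseteq N$ or $I_{\alpha }K_{\gamma }C\subseteq N$ and we are done. Otherwise there exist $i_{\alpha },i_{\alpha }^{\prime }\in I_{\alpha }$ with $i_{\alpha }J_{\beta }C\nsubseteq N$ and $i_{\alpha }^{\prime }K_{\gamma }C\nsubseteq N$; Lemma 4.4 then forces $i_{\alpha }K_{\gamma }C\subseteq N$ and $i_{\alpha }^{\prime }J_{\beta }C\subseteq N$. Feeding $i_{\alpha }+i_{\alpha }^{\prime }\in I_{\alpha }$ back into Lemma 4.4 yields $(i_{\alpha }+i_{\alpha }^{\prime })J_{\beta }C\subseteq N$ or $(i_{\alpha }+i_{\alpha }^{\prime })K_{\gamma }C\subseteq N$, and each case contradicts one of the standing non-containments after subtracting the known containment, using that $N$ is closed under addition. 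Hence $I_{\alpha }J_{\beta }C\subseteq N$ or $I_{\alpha }K_{\gamma }C\subseteq N$.

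For $(ii) \Rightarrow (i)$ I would take $r_{\alpha },s_{\beta },t_{\gamma }\in h(R)$ and a graded submodule $N$ with $r_{\alpha }s_{\beta }t_{\gamma }C\subseteq N$, and form the principal graded ideals $I=r_{\alpha }R$, $J=s_{\beta }R$, $K=t_{\gamma }R$. The only point needing a little care is the degree bookkeeping: the relevant homogeneous components are $I_{\alpha }=r_{\alpha }R_{e}$, $J_{\beta }=s_{\beta }R_{e}$, $K_{\gamma }=t_{\gamma }R_{e}$, so that, since $R$ is commutative and $R_{e}C=C$ (because $1\in R_{e}$ and $R_{e}C\subseteq C$), one has $I_{\alpha }J_{\beta }K_{\gamma }C=r_{\alpha }s_{\beta }t_{\gamma }C\subseteq N$, and likewise $I_{\alpha }J_{\beta }C=r_{\alpha }s_{\beta }C$, etc. Applying $(ii)$ then delivers one of $s_{\beta }t_{\gamma }C\subseteq N$, $r_{\alpha }s_{\beta }C\subseteq N$, $r_{\alpha }t_{\gamma }C\subseteq N$, and Theorem 4.2 upgrades this to the conclusion that $C$ is a graded strongly classical 2-absorbing second submodule of $M$. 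I expect the main obstacle to be purely organizational, namely keeping the casework in $(i)\Rightarrow (ii)$ tidy enough that the additive contradiction closes, and verifying the homogeneous components of the principal ideals in $(ii)\Rightarrow (i)$; there is no genuinely new idea beyond Lemma 4.4 and Theorem 4.2.
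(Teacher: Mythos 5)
Your proposal is correct and coincides with the paper's intended argument: the paper omits this proof, stating only that ``by using Theorem 4.2 and Lemma 4.4, the proof is similar to the proof of Theorem 3.4,'' which is precisely your plan of replacing the completely graded irreducible submodule $U$ by an arbitrary graded submodule $N$ and substituting Lemma 4.4 for Lemma 3.3. Your explicit degree bookkeeping $(r_{\alpha }R)_{\alpha }=r_{\alpha }R_{e}$ and the use of $R_{e}C=C$ in $(ii)\Rightarrow (i)$ is a harmless (indeed slightly more careful) elaboration of the same route.
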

\begin{proof}
By using Theorem 4.2 and Lemma 4.4, the proof
is similar to the proof of Theorem 3.4, so we omit it.
\end{proof}
\begin{theorem}
Let $R$ be a $G$-graded ring, $M$ a graded $R$%
-module and $C$ a non-zero graded submodule of $M$ and $N$ a graded
submodule of $M$ with\ $C\nsubseteq N.$ Then $C$ is a graded strongly
classical 2-absorbing second submodule of $M$ if and only if $(N:_{R}C)$ is
a graded 2-absorbing ideal of $R.$
\end{theorem}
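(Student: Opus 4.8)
The plan is to read both implications off the submodule characterization of graded strongly classical 2-absorbing second submodules furnished by Theorem 4.2, in particular the equivalence $(i)\Leftrightarrow(ii)$, which is phrased in terms of an \emph{arbitrary} graded submodule $N$ rather than a completely graded irreducible one. This is precisely what turns the colon ideal $(N:_{R}C)$ into the natural object to test, and it is the reason a full biconditional is available here, whereas its classical counterpart (Theorem 3.7) yields only one implication and must be restricted to completely graded irreducible $U$, the definition of graded classical 2-absorbing second submodule being stated only for such $U$. I read the statement with the implicit universal quantifier: $C$ is graded strongly classical 2-absorbing second if and only if $(N:_{R}C)$ is a graded 2-absorbing ideal for \emph{every} graded submodule $N$ of $M$ with $C\nsubseteq N$.

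For the forward direction, assume $C$ is graded strongly classical 2-absorbing second and fix a graded submodule $N$ with $C\nsubseteq N$. First I would observe that $(N:_{R}C)$ is proper, since $1\in(N:_{R}C)$ would force $C\subseteq N$. Then, given $r_{\alpha},s_{\beta},t_{\gamma}\in h(R)$ with $r_{\alpha}s_{\beta}t_{\gamma}\in(N:_{R}C)$, i.e.\ $r_{\alpha}s_{\beta}t_{\gamma}C\subseteq N$, I would apply Theorem 4.2$(ii)$ to obtain one of $r_{\alpha}s_{\beta}C\subseteq N$, $s_{\beta}t_{\gamma}C\subseteq N$, or $r_{\alpha}t_{\gamma}C\subseteq N$. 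Translating each inclusion back into a membership in the colon ideal gives one of $r_{\alpha}s_{\beta},s_{\beta}t_{\gamma},r_{\alpha}t_{\gamma}\in(N:_{R}C)$, which is exactly the graded 2-absorbing condition.

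For the converse, I would verify condition $(ii)$ of Theorem 4.2. Let $r_{\alpha},s_{\beta},t_{\gamma}\in h(R)$ and let $N$ be any graded submodule with $r_{\alpha}s_{\beta}t_{\gamma}C\subseteq N$; the key move is to split on whether $C\subseteq N$. If $C\subseteq N$, then since $N$ is a submodule I get $r_{\alpha}s_{\beta}C\subseteq r_{\alpha}s_{\beta}N\subseteq N$, so the required conclusion holds trivially. If $C\nsubseteq N$, then $(N:_{R}C)$ is graded 2-absorbing by hypothesis, and $r_{\alpha}s_{\beta}t_{\gamma}C\subseteq N$ reads as $r_{\alpha}s_{\beta}t_{\gamma}\in(N:_{R}C)$; the 2-absorbing property then places one of the three pairwise products in $(N:_{R}C)$, which is one of $r_{\alpha}s_{\beta}C\subseteq N$, $s_{\beta}t_{\gamma}C\subseteq N$, $r_{\alpha}t_{\gamma}C\subseteq N$. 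By Theorem 4.2 this shows $C$ is graded strongly classical 2-absorbing second.

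The only delicate point is the $C\subseteq N$ case in the converse: the hypothesis supplies the 2-absorbing property of $(N:_{R}C)$ only when $C\nsubseteq N$, and indeed $(N:_{R}C)=R$ fails to be proper exactly when $C\subseteq N$, so one cannot invoke the 2-absorbing hypothesis there and must instead use that $N$ is closed under the $R$-action. Everything else is a direct dictionary translation between inclusions $XC\subseteq N$ and memberships $X\subseteq(N:_{R}C)$, so I expect no further obstacles.
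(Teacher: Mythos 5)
Your proof is correct and follows essentially the same route as the paper: both directions are read off Theorem 4.2$(i)\Leftrightarrow(ii)$, with the converse handled by the same case split on whether $C\subseteq N$ (the paper likewise disposes of the case $C\subseteq N$ trivially and invokes the 2-absorbing hypothesis only when $C\nsubseteq N$). Your explicit remark that the statement must carry an implicit universal quantifier over $N$ matches how the paper's own proof actually uses the hypothesis.
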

\begin{proof}
Assume that $C$ is a graded strongly classical 2-absorbing second submodule
of $M.$ Since $C\nsubseteq N,$ $(N:_{R}C)\neq R.$ Now let $r_{\alpha },$ $s_{\beta
},$ $t_{\gamma }\in h(R)$ such that $r_{\alpha }s_{\beta }t_{\gamma }\in
(N:_{R}C).$ Then $r_{\alpha }s_{\beta }t_{\gamma }C\subseteq N.$ By
Theorem 4.2, we have either $r_{\alpha }s_{\beta }C\subseteq N$ or $\
s_{\beta }t_{\gamma }C\subseteq N$ or $r_{\alpha }t_{\gamma }C\subseteq N.$
Thus either $r_{\alpha }s_{\beta }\in (N:_{R}C)$ or $\ s_{\beta }t_{\gamma
}\in (N:_{R}C)$ or $r_{\alpha }t_{\gamma }\in (N:_{R}C).$ Therefore $%
(N:_{R}C)$ is a graded 2-absorbing ideal of $R.$ Conversely, Let $r_{\alpha
},$ $s_{\beta },$ $t_{\gamma }\in h(R)$ and $N$ be a graded submodule of $M$
with $r_{\alpha }s_{\beta }t_{\gamma }C\subseteq N.$ If $C\subseteq N,$ then
we are done.
Assume that $C\nsubseteq N.$ By our assumption, we have $(N:_{R}C)$ is a
graded 2-absorbing ideal of $R.$ Since $r_{\alpha }s_{\beta }t_{\gamma }\in
(N:_{R}C)$, we conclude that either $r_{\alpha }s_{\beta }C\subseteq N$ or $%
s_{\beta }t_{\gamma }C\subseteq N$ or $r_{\alpha }t_{\gamma }C\subseteq N$.
Hence $C$ is a graded strongly classical 2-absorbing second submodule of $M$ by Theorem 4.2.
\end{proof} 

Clearly every graded strongly 2-absorbing second submodule is a graded
strongly classical 2-absorbing second submodule. The following example shows that the
converse is not true in general.
\begin{example}
 Let $G=%
\mathbb{Z}
_{2},$ then $R=%
\mathbb{Z}
$ is a $G$-graded ring with $R_{0}=%
\mathbb{Z}
$ and $R_{1}=\{0\}.$ Let $M=%
\mathbb{Z}
_{6}\times
\mathbb{Q}
$ be a graded $R$-module with $M_{0}=%
\mathbb{Z}
_{6}\times
\mathbb{Q}
$ and $M_{1}=\{(0,0)\}.$ Then $M$ is a graded strongly classical 2-absorbing
second module which is not a graded strongly 2-absorbing second module.
\end{example}

\begin{theorem}
Let $R$ be a $G$-graded ring and $M_{1}, M_{2}$ be two graded $R$-modules.
Let $f:M_{1}\rightarrow M_{2}$ be a graded monomorphism. Then we have the
following.
\begin{enumerate}[\upshape (i)]

\item If $C_{1}$ is a graded strongly classical 2-absorbing second
submodule of $M_{1}$, then $f(C_{1})$ is a graded strongly classical
2-absorbing second submodule of $M_{2}$.

\item If $C_{2}$ is a graded strongly classical 2-absorbing second
submodule of $f(M_{1})$, then $f^{-1}(C_{2})$ is a graded strongly classical
2-absorbing second submodule of $M_{1}.$

\end{enumerate}
\end{theorem}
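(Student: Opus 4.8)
The plan is to prove both parts using the transfer of completely graded irreducible submodules under the monomorphism $f$, exactly as was done for the graded classical case in Theorem 3.9, but now working with finite intersections of such submodules to match the ``strongly'' condition. Throughout I would invoke \cite[Lemma 2.11]{5}, which (as used in the proof of Theorem 3.9) tells us that if $U_1$ is a completely graded irreducible submodule of $M_1$ then $f(U_1)$ is completely graded irreducible in $f(M_1)$, and conversely that $f^{-1}(U_2)$ is completely graded irreducible in $M_1$ whenever $U_2$ is completely graded irreducible in $f(M_1)$. The two facts I would lean on repeatedly are that $f$ is injective, so $f^{-1}(f(X))=X$ for a submodule $X$ of $M_1$, and that $f(f^{-1}(Y))=Y\cap f(M_1)$ for $Y\subseteq M_2$.

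For part $(i)$, I would first note $f(C_1)\neq 0$ since $C_1\neq 0$ and $f$ is a monomorphism. The main point, however, is a subtlety in the statement: it asserts that $f(C_1)$ is strongly classical $2$-absorbing second in $M_2$ (not merely in $f(M_1)$), so the completely graded irreducible submodules $U_1,U_2,U_3$ that I must test against live in $M_2$, not in $f(M_1)$. I would take $r_\alpha,s_\beta,t_\gamma\in h(R)$ and completely graded irreducible submodules $U_1,U_2,U_3$ of $M_2$ with $r_\alpha s_\beta t_\gamma f(C_1)\subseteq U_1\cap U_2\cap U_3$. Applying $f^{-1}$ and using $f^{-1}(f(C_1))=C_1$ gives $r_\alpha s_\beta t_\gamma C_1\subseteq f^{-1}(U_1)\cap f^{-1}(U_2)\cap f^{-1}(U_3)$. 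Since each $U_i\cap f(M_1)$ is completely graded irreducible in $f(M_1)$ and $f^{-1}(U_i)=f^{-1}(U_i\cap f(M_1))$ is then completely graded irreducible in $M_1$, the hypothesis on $C_1$ yields one of $r_\alpha s_\beta C_1$, $s_\beta t_\gamma C_1$, $r_\alpha t_\gamma C_1$ contained in the triple intersection $\bigcap_i f^{-1}(U_i)$. Applying $f$ and using $f(f^{-1}(U_i))=U_i\cap f(M_1)\subseteq U_i$ then pushes the conclusion back to $U_1\cap U_2\cap U_3$, which finishes $(i)$.

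For part $(ii)$, I would again begin with $f^{-1}(C_2)\neq 0$, which follows because $C_2\neq 0$ is a submodule of $f(M_1)=f(M_1)\cap M_2$, so $C_2=f(f^{-1}(C_2))$ forces $f^{-1}(C_2)\neq 0$ by injectivity of $f$. Then given $r_\alpha,s_\beta,t_\gamma\in h(R)$ and completely graded irreducible submodules $U_1,U_2,U_3$ of $M_1$ with $r_\alpha s_\beta t_\gamma f^{-1}(C_2)\subseteq U_1\cap U_2\cap U_3$, I would apply $f$ and use the identity $r_\alpha s_\beta t_\gamma C_2=f(r_\alpha s_\beta t_\gamma f^{-1}(C_2))$ (valid since $C_2=C_2\cap f(M_1)=f f^{-1}(C_2)$, exactly as in the proof of Theorem 3.9$(ii)$) to deduce $r_\alpha s_\beta t_\gamma C_2\subseteq f(U_1)\cap f(U_2)\cap f(U_3)$. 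Each $f(U_i)$ is completely graded irreducible in $f(M_1)$ by \cite[Lemma 2.11]{5}, so the strongly classical $2$-absorbing second property of $C_2$ gives one of the three products $r_\alpha s_\beta C_2$, $s_\beta t_\gamma C_2$, $r_\alpha t_\gamma C_2$ inside $f(U_1)\cap f(U_2)\cap f(U_3)$; pulling back through $f^{-1}$ and using injectivity returns the corresponding containment in $U_1\cap U_2\cap U_3$.

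The routine calculations are just the set-theoretic manipulations of $f$ and $f^{-1}$ on intersections, so I expect no single hard obstacle; the one step demanding care is the image/preimage bookkeeping for intersections, namely that $f^{-1}$ distributes over intersections and that $f(f^{-1}(U_i))\subseteq U_i$ with equality $U_i\cap f(M_1)$. The only genuinely substantive observation is the one flagged in part $(i)$: since the $U_i$ are prescribed in $M_2$ rather than in $f(M_1)$, I must replace each $U_i$ by $U_i\cap f(M_1)$ before invoking irreducibility in $f(M_1)$, and confirm that this replacement does not change $f^{-1}(U_i)$. Apart from that, the argument is a direct adaptation of Theorem 3.9 with single completely graded irreducible submodules replaced by triple intersections, and the proof would likely be stated as ``the proof is similar to that of Theorem 3.9, working with $U_1\cap U_2\cap U_3$ in place of $U$.''
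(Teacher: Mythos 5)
Your part (ii) is correct and is essentially the route the paper itself indicates (``use the technique of Theorem 3.9(ii), and apply Theorem 4.2''), and in part (i) you correctly flag the key subtlety: the test submodules $U_1,U_2,U_3$ live in $M_2$, not in $f(M_1)$. But your resolution of that subtlety contains a genuine gap. You assert that each $U_i\cap f(M_1)$ is completely graded irreducible in $f(M_1)$ and then feed $f^{-1}(U_i)=f^{-1}(U_i\cap f(M_1))$ into the definition of the strongly classical $2$-absorbing second property of $C_1$. That assertion is not part of Lemma 2.11 of \cite{5} as you yourself describe it (the lemma transfers complete graded irreducibility along $f$ and $f^{-1}$ between $M_1$ and $f(M_1)$; it says nothing about intersecting a completely graded irreducible submodule of the larger module $M_2$ with $f(M_1)$), and it is simply false when $f(M_1)\subseteq U_i$: in that case $U_i\cap f(M_1)=f(M_1)$ is not a \emph{proper} graded submodule of $f(M_1)$, hence not completely graded irreducible, and correspondingly $f^{-1}(U_i)=M_1$ is not a legitimate input to the definition. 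Even in the case $f(M_1)\nsubseteq U_i$ the claim requires a separate argument (via the characterization of completely graded irreducible submodules through a minimal graded submodule strictly above them), not a citation. So, as written, the central step of your part (i) does not go through.

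The repair is easy, and it is exactly what the paper does: for the \emph{strongly} classical notion one has Theorem 4.2, which replaces triple intersections of completely graded irreducible submodules by arbitrary graded submodules $N$ (part (ii) there) or by the equalities $r_{\alpha}s_{\beta}t_{\gamma}C=r_{\alpha}s_{\beta}C$, etc. (part (iii) there). The paper's proof of (i) is then one line: by Theorem 4.2(iii) one of the three equalities holds for $C_1$, say $r_{\alpha}s_{\beta}t_{\gamma}C_1=r_{\alpha}s_{\beta}C_1$, and applying $f$ gives $r_{\alpha}s_{\beta}t_{\gamma}f(C_1)=f(r_{\alpha}s_{\beta}t_{\gamma}C_1)=f(r_{\alpha}s_{\beta}C_1)=r_{\alpha}s_{\beta}f(C_1)$, so $f(C_1)$ satisfies Theorem 4.2(iii) in $M_2$. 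Alternatively, staying within your setup, apply Theorem 4.2(ii) to $N=f^{-1}(U_1)\cap f^{-1}(U_2)\cap f^{-1}(U_3)$, which is merely a graded submodule of $M_1$ --- no irreducibility is needed, and the degenerate case $f^{-1}(U_i)=M_1$ is harmless. Note this is precisely why the strongly classical version can conclude in all of $M_2$ while Theorem 3.9(i) only concludes in $f(M_1)$: Theorem 4.2 frees the property from the ambient supply of completely graded irreducible submodules, an advantage your definition-level argument cannot exploit.
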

\begin{proof}
$(i)$ Assume that $C_{1}$ is a graded strongly classical 2-absorbing
second submodule of $M.$ It is easy to see $f(C_{1})\neq 0.$ Let $r_{\alpha
},s_{\beta },t_{\gamma }\in h(R).$ By Theorem 4.2, we have either $%
r_{\alpha }s_{\beta }t_{\gamma }C_{1}=r_{\alpha }s_{\beta }C_{1}$ or $%
r_{\alpha }s_{\beta }t_{\gamma }C_{1}=s_{\beta }t_{\gamma }C_{1}$ or $%
r_{\alpha }s_{\beta }t_{\gamma }C_{1}=r_{\alpha }t_{\gamma }C_{1}.$ We can
assume that $r_{\alpha }s_{\beta }t_{\gamma }C_{1}=r_{\alpha }s_{\beta
}C_{1}.$ Then $r_{\alpha }s_{\beta }t_{\gamma }f(C_{1})=f(r_{\alpha
}s_{\beta }t_{\gamma }C_{1})=f(r_{\alpha }s_{\beta }C_{1})=r_{\alpha }s_{\beta
}f(C_{1}).$ Hence $f(C_{1})$ is a graded strongly classical 2-absorbing
second submodule of $M_{2}$ by Theorem 4.2. \newline
$(ii)$ Use the technique of Theorem 3.9(ii), and apply Theorem 4.2.
\end{proof}
\begin{definition}
Let $S$ be a non-zero graded submodule of a graded $R$-module $M$. We say that $S$ is \textit{a graded weakly second submodule
of $M$ }if $r_{\alpha }s_{\beta }S\subseteq N,$ where $r_{\alpha }, s_{\beta
}\in h(R)$ and $N$ is a graded submodule of $M$, then either $r_{\alpha
}S\subseteq N$ or $\ s_{\beta }S\subseteq N.$
\end{definition}

\begin{lemma}
Let $R$ be a $G$-graded ring and $M$ be a graded $R$-module. If $S_{1}$ and $S_{2}$ are graded weakly second submodules of $M,$
then $S=S_{1}+S_{2}$ is a graded strongly classical 2-absorbing second submodule of $M.$
\end{lemma}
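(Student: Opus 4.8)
The plan is to use the characterization in Theorem 4.2, specifically the equivalence between being a graded strongly classical 2-absorbing second submodule and the ideal-theoretic condition: for every $r_\alpha, s_\beta, t_\gamma \in h(R)$ and every graded submodule $N$ of $M$ with $r_\alpha s_\beta t_\gamma S \subseteq N$, one of $r_\alpha s_\beta S$, $s_\beta t_\gamma S$, $r_\alpha t_\gamma S$ lies in $N$. So I would take arbitrary homogeneous $r_\alpha, s_\beta, t_\gamma \in h(R)$ and an arbitrary graded submodule $N$ of $M$ with $r_\alpha s_\beta t_\gamma S \subseteq N$, where $S = S_1 + S_2$, and aim to verify this three-way containment.

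Since $r_\alpha s_\beta t_\gamma S = r_\alpha s_\beta t_\gamma S_1 + r_\alpha s_\beta t_\gamma S_2 \subseteq N$, I immediately get $r_\alpha s_\beta t_\gamma S_i \subseteq N$ for each $i \in \{1,2\}$. Now I exploit the weakly second hypothesis on each $S_i$. The weakly second property handles a \emph{product of two} homogeneous elements, so I would group $r_\alpha s_\beta t_\gamma = (r_\alpha s_\beta) t_\gamma$, treating $r_\alpha s_\beta$ as a single homogeneous element of degree $\alpha\beta$. Applying the definition of graded weakly second to $S_1$ with the pair $(r_\alpha s_\beta, t_\gamma)$ and submodule $N$ gives either $r_\alpha s_\beta S_1 \subseteq N$ or $t_\gamma S_1 \subseteq N$; likewise for $S_2$. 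This produces four cases depending on which alternative holds for each $S_i$.

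The main work is the case analysis and recombination. If $r_\alpha s_\beta S_1 \subseteq N$ and $r_\alpha s_\beta S_2 \subseteq N$, then $r_\alpha s_\beta S \subseteq N$ and we are done. If $t_\gamma S_1 \subseteq N$ and $t_\gamma S_2 \subseteq N$, then $t_\gamma S \subseteq N$; but then $r_\alpha t_\gamma S = r_\alpha (t_\gamma S) \subseteq r_\alpha N \subseteq N$ (and similarly $s_\beta t_\gamma S \subseteq N$), so again one of the three required containments holds. The genuinely mixed cases are the obstacle: say $r_\alpha s_\beta S_1 \subseteq N$ but $t_\gamma S_2 \subseteq N$ (with the other alternatives failing). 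Here I expect to re-group the triple product differently, as $r_\alpha (s_\beta t_\gamma)$ or $(r_\alpha t_\gamma) s_\beta$, and apply the weakly second property of $S_1$ and $S_2$ a second time with these regroupings, combining with the information already extracted to force $s_\beta t_\gamma S \subseteq N$ or $r_\alpha t_\gamma S \subseteq N$.

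The subtlety I anticipate is that the weakly second alternatives for $S_1$ and $S_2$ are \emph{independent}, so naively one might worry $S = S_1 + S_2$ fails; the resolution is that the three different groupings of $r_\alpha s_\beta t_\gamma$ each feed both $S_1$ and $S_2$ separate dichotomies, and by choosing the grouping adapted to the mixed case one always collapses to a single two-element-product containment on the sum. I would organize the proof by first recording the six weakly-second dichotomies (three groupings $\times$ two summands) and then checking that no assignment of outcomes avoids all three conclusions $r_\alpha s_\beta S \subseteq N$, $s_\beta t_\gamma S \subseteq N$, $r_\alpha t_\gamma S \subseteq N$, concluding by Theorem 4.2 that $S_1 + S_2$ is a graded strongly classical 2-absorbing second submodule of $M$.
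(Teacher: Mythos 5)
Your proposal is correct and follows essentially the same route as the paper: both rest on the characterization in Theorem 4.2 and on applying the graded weakly second dichotomy to each summand $S_i$ with suitable regroupings of $r_\alpha s_\beta t_\gamma$, then recombining via a pair of the three elements compatible with both summands. The only cosmetic difference is that you verify condition (ii) of Theorem 4.2 for an arbitrary graded submodule $N$, whereas the paper verifies the equivalent condition (iii) by taking $N=r_\alpha s_\beta t_\gamma S_i$ itself and extracting single-element equalities $x_iS_i=r_\alpha s_\beta t_\gamma S_i$ before recombining; your mixed-case handling (regrouping as $r_\alpha(s_\beta t_\gamma)$ to force $s_\beta t_\gamma S\subseteq N$ or $r_\alpha t_\gamma S\subseteq N$) does close the argument as claimed.
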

\begin{proof}
Assume that $S_{1}$ and $S_{2}$ are graded weakly second
submodules of $M$ and $S=S_{1}+S_{2}.$ Let $r_{\alpha },$ $s_{\beta },$ $%
t_{\gamma }\in h(R)$. As $r_{\alpha }s_{\beta }t_{\gamma }S_{1}\subseteq
r_{\alpha }s_{\beta }t_{\gamma }S_{1}$ and $S_{1}$ is a graded weakly second
submodule, then either $r_{\alpha }S_{1}\subseteq r_{\alpha }s_{\beta
}t_{\gamma }S_{1}$ or $\ s_{\beta }S_{1}\subseteq r_{\alpha }s_{\beta
}t_{\gamma }S_{1}$ or $t_{\gamma }S_{1}\subseteq r_{\alpha }s_{\beta
}t_{\gamma }S_{1}.$ This yields that either $r_{\alpha }S_{1}=r_{\alpha
}s_{\beta }t_{\gamma }S_{1}$ or $\ s_{\beta }S_{1}=r_{\alpha }s_{\beta
}t_{\gamma }S_{1}$ or $t_{\gamma }S_{1}=r_{\alpha }s_{\beta }t_{\gamma
}S_{1}.$ Similarly, we have either $r_{\alpha }S_{2}=r_{\alpha }s_{\beta
}t_{\gamma }S_{2}$ or $\ s_{\beta }S_{2}=r_{\alpha }s_{\beta }t_{\gamma
}S_{2}$ or $t_{\gamma }S_{2}=r_{\alpha }s_{\beta }t_{\gamma }S_{2}.$ We may
assume that $r_{\alpha }S_{1}=r_{\alpha }s_{\beta }t_{\gamma }S_{1}$.
Likewise assume that $s_{\beta }S_{2}=r_{\alpha }s_{\beta }t_{\gamma }S_{2}.$
Then $r_{\alpha }s_{\beta }t_{\gamma }S=r_{\alpha }s_{\beta }t_{\gamma
}S_{1}+r_{\alpha }s_{\beta }t_{\gamma }S_{2}=r_{\alpha }S_{1}+s_{\beta
}S_{2}=r_{\alpha }s_{\beta }S=r_{\alpha }s_{\beta }S_{1}+r_{\alpha }s_{\beta
}S_{2}=r_{\alpha }s_{\beta }S.$ By Theorem 4.2, we have $S$ is a
graded strongly classical 2-absorbing second submodules of $M$.
\end{proof}
Let $R_{i}$ be a graded commutative ring with identity and $M_{i}$ be a
graded $R_{i}$-module, for $i=1,2$. Let $R=R_{1}\times R_{2}.$ Then $%
M=M_{1}\times M_{2}$ is a graded $R$-module and each graded submodule $C$ of
$M$ is of the form $C=C_{1}\times C_{2}$ for some graded submodules $C_{1}$
of $M_{1}$ and $C_{2}$ of $M_{2}$ (see \cite{28}.)
\begin{theorem}
Let $R=R_{1}\times R_{2}$ be a $G$-graded ring and $M=M_{1}\times M_{2}$ a graded $R$-module where $M_{1}$ is a graded $R_{1}$%
-module and $M_{2}$ is a graded $R_{2}$-module. Let $C_{1}$ and $C_{2}$ be a
non-zero graded submodules of $M_{1}$ and $M_{2},$ respectively.
\begin{enumerate}[\upshape (i)]

\item $C=C_{1}\times 0$ is a graded strongly classical 2-absorbing
second submodule of $M$ if and only if $C_{1}$ is a graded strongly
classical 2-absorbing second submodule of $M_{1}$.

\item $C=0\times C_{2}$ is a graded strongly classical 2-absorbing second
submodule of $M$ if and only if $C_{2}$ is a graded strongly classical
2-absorbing second submodule of $M_{2}$.

\item $C=C_{1}\times C_{2}$ is a graded strongly classical 2-absorbing
submodule of $M$ if and only if $C_{1}$ and $C_{2}$ are graded weakly
second submodules of $M_{1}$ and $M_{2},$ respectively.

\end{enumerate}
\end{theorem}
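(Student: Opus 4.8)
The plan is to prove each of the three parts by reducing the product structure to the component modules, using the characterization in Theorem 4.2 throughout. The key algebraic fact I would exploit is that in $R = R_1 \times R_2$ every homogeneous element has the form $r_\alpha = (a_\alpha, b_\alpha)$ with $a_\alpha \in h(R_1)$ and $b_\alpha \in h(R_2)$, and that for a submodule $C = C_1 \times C_2$ one has $r_\alpha C = a_\alpha C_1 \times b_\alpha C_2$. Thus containments and equalities of the form $r_\alpha s_\beta t_\gamma C \subseteq N$ decompose coordinatewise.

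For part (i), with $C = C_1 \times 0$, I would argue both directions via Theorem 4.2(iii). For the forward direction, suppose $C$ is graded strongly classical 2-absorbing second in $M$; given $a_\alpha, a_\beta', a_\gamma'' \in h(R_1)$, lift them to $r_\alpha = (a_\alpha, 0)$, etc., in $h(R)$, apply the trichotomy $r_\alpha s_\beta t_\gamma C = r_\alpha s_\beta C$ (or one of the other two), and read off the first coordinate to obtain $a_\alpha a_\beta' a_\gamma'' C_1 = a_\alpha a_\beta' C_1$ (the second coordinate is vacuously $0$). For the converse, one takes homogeneous elements of $R$, projects to their first coordinates, applies the assumed property of $C_1$, and lifts the resulting equality back to $M$ using that the second component is always zero. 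Part (ii) is entirely symmetric and I would simply say so, invoking the same argument with the roles of the factors interchanged.

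The main work is in part (iii), and this is where I expect the obstacle. For the forward direction, assuming $C = C_1 \times C_2$ is graded strongly classical 2-absorbing second, I would show each $C_i$ is graded weakly second: given $a_\alpha a_\beta S_i \subseteq N_i$ in $M_i$, I need to produce a triple of homogeneous elements of $R$ whose action on $C$ is controlled, and then use the trichotomy from Theorem 4.2(iii) to force one of the two "weakly second" conclusions. The delicate point is that the strongly classical 2-absorbing property yields three possible equalities, only some of which translate into the desired weakly-second conclusion, so I would need to choose the auxiliary third element (e.g., taking $t_\gamma = (a_\alpha, 1)$ or using the identity in one coordinate) so that the unwanted cases collapse. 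For the converse, if $C_1$ and $C_2$ are graded weakly second, I would apply Theorem 4.2(iii) directly: using $r_\alpha C_i = r_\alpha s_\beta t_\gamma C_i$-type equalities coordinatewise (exactly as in the proof of Lemma 4.10), each coordinate independently satisfies one of three equalities, and I would combine the coordinate choices to verify that the full product $r_\alpha s_\beta t_\gamma C$ equals one of $r_\alpha s_\beta C$, $s_\beta t_\gamma C$, or $r_\alpha t_\gamma C$.

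The crux throughout is bookkeeping the case analysis so that the independently-made choices in the two coordinates assemble into a single global trichotomy; I would handle this by fixing notation for the three cases in each coordinate and checking that each of the (at most nine) combinations yields one of the three required global equalities, appealing to Lemma 4.10 as the template for the converse direction of part (iii).
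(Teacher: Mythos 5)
Your proposal is correct in outline and draws on the same toolkit as the paper (the characterization in Theorem 4.2 and Lemma 4.10), but the routes diverge in detail. For part (i) the paper does not argue coordinatewise: it sets $M^{\prime }=M_{1}\times 0$, observes that $C=C_{1}\times 0$ is graded strongly classical 2-absorbing second in $M^{\prime }$, and transfers the property across $M^{\prime }\simeq M_{1}$ via Theorem 4.8. Your direct lift-and-project argument with elements $(a_{\alpha },0)$ and the trichotomy of Theorem 4.2(iii) is a legitimate alternative, and arguably cleaner, since that trichotomy is intrinsic to $C$ and the second component of $C$ is zero, so the second coordinates of the lifts are irrelevant. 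For the converse of (iii), the paper simply notes that $C_{1}\times 0$ and $0\times C_{2}$ are graded weakly second submodules of $M$ and quotes Lemma 4.10 for their sum $C=C_{1}\times 0+0\times C_{2}$; your nine-case coordinatewise combination amounts to re-deriving that lemma and does work (each coordinate's chosen factor forces all two-factor equalities containing it, so any pair containing both coordinate choices yields a global equality), but it is unnecessary given Lemma 4.10.

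The one place your sketch lacks the decisive idea is the forward direction of (iii). The paper's mechanism is the triple $(r_{\alpha },1)$, $(s_{\beta },1)$, $(1,0)$ acting on $C$, with target $N\times U_{2}$, where $U_{2}$ is a completely graded irreducible submodule of $M_{2}$ with $C_{2}\nsubseteq U_{2}$ (such $U_{2}$ exists by \cite[Lemma 2.3]{5} because $C_{2}\neq 0$); the unwanted alternative $r_{\alpha }s_{\beta }C_{1}\times C_{2}\subseteq N\times U_{2}$ is then excluded precisely by $C_{2}\nsubseteq U_{2}$, leaving the two weakly-second conclusions. Your parenthetical candidate $t_{\gamma }=(a_{\alpha },1)$ does not work: it produces first-coordinate equalities such as $a_{\alpha }^{2}a_{\beta }C_{1}=a_{\alpha }a_{\beta }C_{1}$, which do not yield the weakly second conclusion. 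Your other hint (identity in one coordinate) is the right one, and in the equality form of Theorem 4.2(iii) you can even dispense with $U_{2}$: taking $r=(a_{\alpha },1)$, $s=(a_{\beta },1)$, $t=(1,0)$, the case $rstC=rsC$ reads $a_{\alpha }a_{\beta }C_{1}\times 0=a_{\alpha }a_{\beta }C_{1}\times C_{2}$, impossible since $C_{2}\neq 0$, while the other two cases give $a_{\beta }C_{1}=a_{\alpha }a_{\beta }C_{1}\subseteq N$ or $a_{\alpha }C_{1}=a_{\alpha }a_{\beta }C_{1}\subseteq N$. What your write-up never states, and must, is that the hypothesis $C_{2}\neq 0$ (respectively $C_{1}\neq 0$) is exactly what kills the unwanted case; without invoking it the case analysis cannot close. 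Finally, note a homogeneity caveat your proposal inherits from the paper rather than introduces: under the grading $R_{g}=(R_{1})_{g}\times (R_{2})_{g}$ that you yourself state, an element $(a_{\alpha },1)$ is homogeneous only when $\alpha =e$, so strictly speaking the lifts with $1$ in one coordinate require either an adjusted convention or a replacement of $1$ by suitable homogeneous elements; this wrinkle affects the paper's own proof of (iii) equally.
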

\begin{proof}
 $(i)$ Assume that $C=C_{1}\times 0$ is a graded strongly
classical 2-absorbing second submodule of $M$. From our assumption, $C$ is a
non-zero graded submodule, so $C_{1}\neq 0.$ Set $M^{\prime }=M_{1}\times 0.$
One can see that $C=C_{1}\times 0$ is a graded strongly classical
2-absorbing second submodule of $M^{\prime }.$ Also, observe that $M^{\prime
}\simeq M_{1}$ and $C$ $\simeq C_{1}.$ Hence $C_{1}$ is a graded strongly
classical 2-absorbing second submodule of $M_{1}$ by Theorem 4.8. The converse is clear.

$(ii)$ It can be easily verified similar to $(i)$.

$(iii)$ Assume that $C=C_{1}\times C_{2}$ is a graded strongly classical
2-absorbing submodule of $M.$ We have to show that $C_{1}$ is a graded
weakly second submodule of $M_{1}.$ Since $C_{2}\neq 0,$ by \cite[Lemma 2.3]{5}, there exist a completely graded irreducible
submodule $U_{2}$ of $M_{2}$ such that $C_{2}\nsubseteq U_{2}.$ \ Let $%
r_{\alpha },$ $s_{\beta }\in h(R)$, $N$ be a graded submodule of $M_{1}$ and
$r_{\alpha }s_{\beta }C_{1}\subseteq N.$ Thus $(r_{\alpha },1)(s_{\beta
},1)(1,0)(C_{1}\times C_{2})=r_{\alpha }s_{\beta }C_{1}\times 0\subseteq
N\times U_{2}.$ As $C=$ $C_{1}\times C_{2}$ is a graded strongly classical
2-absorbing submodule of $M,$ by Theorem 4.2, we have either $%
(r_{\alpha },1)(s_{\beta },1)(C_{1}\times C_{2})=r_{\alpha }s_{\beta
}C_{1}\times C_{2}\subseteq N\times U_{2}$ or $(s_{\beta
},1)(1,0)(C_{1}\times C_{2})=s_{\beta }C_{1}\times 0\subseteq N\times U_{2}$
or $(r_{\alpha },1)(1,0)(C_{1}\times C_{2})=r_{\alpha }C_{1}\times
0\subseteq N\times U_{2}.$

If $r_{\alpha }s_{\beta }C_{1}\times C_{2}\subseteq N\times U_{2},$ then $%
C_{2}\subseteq U_{2}$ a contradiction. Which implies either $r_{\alpha
}C_{1}\subseteq N$ or $s_{\beta }C_{1}\subseteq N.$ Thus $C_{1}$ is a graded
weakly second submodule of $M_{1}.$ Similarly, we can show that $C_{2}$ is a
graded weakly second submodule of $M_{2}.$

Conversely, assume that $C_{1}$ and $C_{2}$ are graded weakly second
submodules of $M_{1}$ and $M_{2},$ respectively. Then it is clear that $%
C_{1}\times 0,$ $0\times C_{2}$ are graded weakly second submodules of $M.$
By Lemma 4.10, we have $C=C_{1}\times 0+0\times C_{2}$ is a graded
strongly classical 2-absorbing submodule of $M$.
\end{proof}

  \begin{center}{\textbf{Acknowledgments}}
  \end{center}
  The authors wish to thank sincerely the referees for their valuable comments and suggestions.


  {\textbf{  Conflict of Interest Statement:}}
    On behalf of all authors, the corresponding author states that there is no conflict of interest.

\bigskip\bigskip\bigskip\bigskip

\end{document}